\documentclass[a4paper,10pt]{article}
\usepackage[utf8]{inputenc}
\usepackage{amsmath,amssymb,amsthm} 
\newcommand{\R}{\mathbb{R}}
\newcommand{\N}{\mathbb{N}}
\newcommand{\Lone}{\mathcal{L}_1}
\newcommand{\Ltwo}{\mathcal{L}_2}
\newcommand{\sign}{\text{\rm sign}}

\newtheorem{definition}{Definition}
\newtheorem{proposition}{Proposition}
\newtheorem{corollary}{Corollary}
\newtheorem{lemma}{Lemma}
\newtheorem{theorem}{Theorem}
\newtheorem{remark}{Remark}
\newcommand{\Lams}{\Theta}
\newcommand{\con}{\psi}
\newcommand{\f}{{\theta}}
\newcommand{\h}{{\sigma}}
\newcommand{\LM}[2]{ {L_{#2}^{#1}}} 
\newcommand{\LCc}[2]{ {c_{#2}^{#1}}} 
\newcommand{\scal}{\langle}
\newcommand{\scar}{\rangle}
\newcommand{\cff}{p}
\newcommand{\cgg}{q}
\newcommand{\myg}{\zeta}
\newcommand{\ccc}{\eta}
\newcommand{\ddd}{\xi}
\renewcommand{\div}{\nabla.}

\title{Inversion formulas for the linearized impedance tomography problem}
\author{Stefan Kindermann}

\begin{document}
\maketitle
\begin{abstract}         
We consider the linearized electrical impedance tomography problem 
in two dimensions on the unit disk. By a linearization around constant
coefficients and using a trigonometric basis, we calculate the linearized 
Dirichlet-to-Neumann operator in terms of moments of the conduction
coefficient of the problem. By expanding this coefficient into angular 
trigonometric functions and Legendre-M\"untz polynomials in radial 
coordinates, we can find a lower-triangular representation of the 
parameter to data mapping. As a consequence, we find an explicit 
solution formula for the corresponding inverse problem.  Furthermore,
we also consider the problem with boundary data given only 
on parts of the boundary while setting homogeneous Dirichlet values on the rest. 
We show that 
the conduction coefficient is uniquely determined from incomplete 
data of the linearized Dirichlet-to-Neumann operator with an explicit 
solution formula provided.
\end{abstract}

\section{Introduction}
A classical parameter identification problem is to reconstruct 
certain parameter function in a second-order elliptic equation 
from multiple measurements of the boundary values and 
boundary fluxes of the solution. 

Specifically, we consider in this article two types of  elliptic equations on
the unit ball in $\R^2$. Define the differential operators  
\[ \Lone u = -\div(\gamma \nabla) u \qquad \qquad \Ltwo u = -\Delta + c u,  \]
where $\div$ denotes divergence, $\nabla$ is the gradient, and 
 $\gamma >0$ and $c$ are sufficiently regular functions. 

We may associate to each operator the solutions to the Dirichlet problem on the unit ball
\begin{equation}\label{main1}
\begin{split}
\Lone u_f & = 0 \qquad \mbox{ in } \Omega  \\
 u_f & = f \qquad \mbox{ on } \partial \Omega, 
\end{split}
 \end{equation}
or in the second case, 
\begin{equation}\label{main2}
\begin{split}
\Ltwo u_f & = 0 \qquad \mbox{ in } \Omega  \\
 u_f & = f \qquad \mbox{ on } \partial \Omega, 
 \end{split}
 \end{equation}
 where $\Omega = \{ (x,y) \in \R^2 \,|\, x^2 +y^2 < 1 \}$ and 
$f$ is  a sufficiently regular function. 

Under well-known conditions on $\gamma$ or $c$ and $f$, these problems
have a solution in $H^1(\Omega)$. For instance, for $f \in H^{1/2}(\partial \Omega)$ and
$c_1\leq \gamma \leq c_0$ 
almost everywhere,  a solution in $H^1$ to \eqref{main2} exists. Also, if, 
$f \in H^{1/2}(\partial \Omega)$ and,
e.g.,  
$0 \leq c \leq c_1$, then 
a solution to \eqref{main2} exists; see, e.g., \cite{Is}.

For fixed parameter functions $\gamma$ (respectively $c$), we may consider 
the Dirichlet-to-Neumann mapping, 
\[ \Lambda_{1,\gamma}: f \to  \gamma \frac{\partial}{\partial n} u_f  \qquad  \qquad
 \Lambda_{2,c}: f \to  \frac{\partial}{\partial n} u_f, \]
where $u_f$ is the solution to \eqref{main1} for $\Lambda_1$ and 
\eqref{main2} for $\Lambda_2$, respectively.  Under mild conditions, 
these mappings are continuous from $H^{1/2}(\partial \Omega)$ to 
 $H^{-1/2}(\partial \Omega)$.

The classical inverse problem of electrical impedance tomography, originating in the 
famous paper by Calder\'on \cite{Cal}, asks to 
reconstruct the parameter function $\gamma$ from  knowledge of the mapping $\Lambda_{1,\gamma}$, i.e., 
from all pairs of $(f,\gamma \frac{\partial}{\partial n} u_f)$ of Dirichlet and 
Neumann values.  
The similar problem has been stated also for the case of $\Ltwo$  (i.e., the Schrödinger equation),
where $c$ is sought to be found from $\Lambda_{2,c}$. 

Both are well-studied and classical inverse problems for partial differential 
equation; the central difficulty lies in the fact that only the boundary is accessible 
for measurements, while $\gamma$ is sought to be reconstructed in the interior. 
Concerning the unique identifiability of $\gamma$ (or $c$)  from 
the Dirichlet-to-Neumann map, several landmark papers were published, 
for instance, by Sylvester and Uhlmann \cite{SylUl1,SylUl2}, 
Nachman \cite{Nach}, and Astala and P\"aiv\"arinta \cite{AsPa}. An overview of results
and related problems can be found in the classical book by Isakov \cite{Is} as well 
as in the review articles \cite{Borcea,Uhlmanrev}.

In this article we consider only the linearized versions of these problems, namely, 
to find $\gamma$ or $c$ when the Dirichlet-to-Neumann maps are linearized 
(with respect to $\gamma$, $c$) 
around a constant. It will be shown, amongst others, that one can find explicit reconstruction 
formulas in these cases.

\section{Problem setup} 
Considering the problems \eqref{main1} and \eqref{main2}, 
it is well-known that certain differences of  Dirichlet-to-Neumann maps can be expressed as 
energy integrals: for $f, g \in H^{1/2}(\partial \Omega)$, it holds that \cite[Eq. (5.0.3)]{Is} 
\begin{align} 
\langle (\Lambda_{1,\gamma+1} - \Lambda_{1,1})f, g\rangle_{H^{-1/2},H^{1/2}}
& = \int_{\Omega} \gamma(x) \nabla u_{f,\gamma+1}(x). \nabla u_{g,1}(x) dx,  \label{int1} \\
\langle (\Lambda_{2,c} - \Lambda_{2,0})f, g\rangle_{H^{-1/2},H^{1/2}}
& =  \int_{\Omega} c(x) u_{f,c}(x) u_{g,0}(x) dx, \label{int2} 
\end{align}
where $ u_{f,\gamma+1}$  is  the solution to \eqref{main1} with coefficient $\gamma+1$ and 
Dirichlet data $f$, and  $u_{g,1}$  is the solution to \eqref{main1}  with coefficient 1 
(i.e., the Laplace equation) and 
Dirichlet data $g$.  Similarly  $u_{f,c}, u_{g,0}$ are the solutions to \eqref{main2} with coefficient 
$c$ and Dirichlet values $f$ 
and $0$ (Laplace equation) and Dirichlet values $g$, respectively.

Note that the right hand side in \eqref{int1}--\eqref{int2} depends in a nonlinear way on the parameter through 
the functions $u_{f,\gamma+1}$ and $u_{f,c}$. Thus, in a next step, we consider a 
linearization with respect to $\gamma$ or $c$ of the 
right-hand side around the constant $\gamma= 1$ and $c = 0$.  We thus view only 
small/moderate perturbation of $\gamma$, respectively $c$,  around a constant conductivity to be of interest. 
This yields, the linearized impedance tomography problem with the 
following operators. 
\begin{align} 
\langle {\Lambda'}_{1}f, g\rangle_{H^{-1/2},H^{1/2}}
& = \int_{\Omega} \gamma(x) \nabla u_{f,1}. \nabla u_{g,1} dx,  \label{intlin1} \\
\langle {\Lambda'}_{2}f, g\rangle_{H^{-1/2},H^{1/2}}
& =  \int_{\Omega} c(x) u_{f,0} u_{g,0} dx. \label{intlin2} 
\end{align}

It is well-known that under mild conditions, these operators correspond to the linearization 
of the associated parameter-to-data mappings. In fact, e.g., for $\gamma,c \in L^\infty(\Omega)$, 
one can verify that this 
is indeed the output of the Fr\'{e}chet-derivative of these mappings. 

\begin{definition}
The linearized impedance tomography problem for  \eqref{main1} is the problem to find  the
function $\gamma$ from the values 
$\{ \langle {\Lambda'}_{1} f,g\rangle\,|\,  \text{ for all } f,g \in H^{1/2}(\partial \Omega) \}$. 
The linearized  tomography problem for  \eqref{main2} is the problem to find the function $c$ from the values 
$\{ \langle {\Lambda'}_{2} f,g\rangle\,|\,  \text{ for all } f,g \in H^{1/2}(\partial \Omega) \}$. 
\end{definition} 

Of course, it is enough, to known $\langle {\Lambda'}_{1} f,g\rangle$ for all $f,g$ out of a 
basis of $H^{1/2}(\partial \Omega)$. 
In the next section, we find a formula for these operators in the trigonometric basis. 

\section{Linearized Dirichlet-to-Neumann maps in the trigonometric basis} 
Specifically, we now consider  ${\Lambda'}_{1}, {\Lambda'}_{2}$, when these operator are applied to 
trigonometric functions.  That is, we consider the family of functions
(living on the boundary of the unit disc)
$$ \{\cos(n \phi)\,|\,n \in \N_0 , n\geq 0\} \cup \{\sin(n \phi)\,|\,n \in \N , n\geq 1\}, \quad \phi \in [0, 2 \pi]. $$ 
These functions consist a basis of the space $H^{1/2}(\partial \Omega)$.
We note that for the impedance tomography problem, we do not have to include $\cos(n \phi)$ for $n = 0$, i.e., 
the constant function, because it is in the nullspace of the Dirichlet-to-Neumann map and hence does not
provide any information.

If $f = \cos(n \phi)$ or $f = \sin(n \phi)$, then the corresponding solutions to \eqref{main1} and \eqref{main2}
for the parameter 
$\gamma = 1$ or $c = 0$ are given in polar coordinates as
\begin{equation}\label{sols} u_f(r,\phi) = r^n \cos( n \phi) \qquad \mbox{ or } \qquad  u_f(r,\phi) = r^n \sin( n \phi), \end{equation}
respectively, where $r = \sqrt{x^2+y^2}$, $\phi = \arctan(y/x)$. 

\begin{definition}\label{def2}
Define the coefficients of the linearized Dirichlet-to-Neumann operators 
 ${\Lambda'}_{1}$,  ${\Lambda'}_{2}$
in the trigonometric basis as follows: 
\begin{align*}
K^{cc}_{i,j}  &:= \langle {\Lambda'}_{1} \cos(i \phi) ,\cos(j \phi) \rangle&   &i,j \in \N,& \\ 
K^{ss}_{i,j}  &:= \langle {\Lambda'}_{1} \sin(i \phi) ,\sin(j \phi) \rangle&  &i,j \in \N,& \\ 
K^{sc}_{i,j}  &:= \langle {\Lambda'}_{1} \sin(i \phi) ,\cos(j \phi) \rangle&  &i,j \in \N,& \\ 
K^{cs}_{i,j}  &:= \langle {\Lambda'}_{1} \cos(i \phi) ,\sin(j \phi) \rangle&   &i,j \in \N& 
\end{align*}
and 
\begin{align*}
J^{cc}_{i,j}  &:= \langle {\Lambda'}_{2} \cos(i \phi) ,\cos(j \phi) \rangle&  &i,j \in \N_0,& \\ 
J^{ss}_{i,j}  &:= \langle {\Lambda'}_{2} \sin(i \phi) ,\sin(j \phi) \rangle&  &i,j \in \N,& \\ 
J^{sc}_{i,j}  &:= \langle {\Lambda'}_{2} \sin(i \phi) ,\cos(j \phi) \rangle& &i \in \N,j\in \N_0,& \\ 
J^{cs}_{i,j}  &:= \langle {\Lambda'}_{2} \cos(i \phi) ,\sin(j \phi) \rangle& &j \in \N, i \in \N_0.& 
\end{align*}
\end{definition} 

Similarly, we may represent $\gamma$ and $c$ in polar coordinates as a Fourier series 
with respect to the angle coordinates:
\begin{align} 
 \gamma(r,\phi) &= a_0 + \sum_{n = 1}^\infty  a_n(r) \cos(n \phi) + b_n (r) \sin( n \phi),   \label{par1xx} \\
c(r,\phi) &= a_0 + \sum_{n = 1}^\infty     a_n(r) \cos(n \phi) + b_n (\phi) \sin( n \phi).   \label{par2}
\end{align}
In order for $\gamma,c$ to be  $L^2(\Omega)$ functions, it is necessary and sufficient that 
\begin{equation}\label{eltwo}
\sum_{n=0}^\infty \int a_n(r)^2 r dr + \sum_{n=1}^\infty \int b_n(r)^2 r dr  < \infty. 
\end{equation}

We can now express the parameter-to-data operator in the trigonometric basis. 
\begin{proposition}
Let $\gamma$ be given by \eqref{par1xx}.  Then 
\begin{align*}  K^{cc}_{i,j}  = K^{ss}_{i,j}
& = i j \pi \myg_{i,j}  \int_0^1 r^{i+j-1} a_{|i-j|}(r) dr & \quad & i,j\geq 1,&  
\\ 
K^{cs}_{i,j}  = K^{sc}_{j,i} & =
  i j \pi  \sign(j-i)   \int_0^1 r^{i+j-1} b_{|i-j|}(r) dr    & \quad & i,j\geq 1,&  
\end{align*}
where 
%
%
\[ \myg_{i,j} = \begin{cases} 1 & |i-j|\geq 1, \\ 
2 & |i-j| = 0, \end{cases}  \]
and $\sign$ is the sign function with $\sign(0) = 0$.
\end{proposition}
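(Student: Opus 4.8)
The plan is to evaluate the energy identity \eqref{intlin1} on the explicit harmonic polynomials \eqref{sols}. For $\gamma\equiv 1$ the solution associated with $\cos(n\phi)$ (resp.\ $\sin(n\phi)$) is $r^n\cos(n\phi)$ (resp.\ $r^n\sin(n\phi)$), so every coefficient of Definition~\ref{def2} is of the form $\int_\Omega \gamma(x)\,\nabla v\cdot\nabla w\,dx$ with $v,w$ two such polynomials. First I would compute these gradient dot products in polar coordinates. From $\nabla u=\partial_r u\,\hat r + r^{-1}\partial_\phi u\,\hat\phi$ one gets $\nabla(r^n\cos n\phi)=n r^{n-1}(\cos(n\phi)\hat r-\sin(n\phi)\hat\phi)$ and $\nabla(r^n\sin n\phi)=n r^{n-1}(\sin(n\phi)\hat r+\cos(n\phi)\hat\phi)$, and the angle-addition formulas then give
\begin{align*}
\nabla(r^i\cos i\phi)\cdot\nabla(r^j\cos j\phi)&=\nabla(r^i\sin i\phi)\cdot\nabla(r^j\sin j\phi)=ij\,r^{i+j-2}\cos((i-j)\phi),\\
\nabla(r^i\cos i\phi)\cdot\nabla(r^j\sin j\phi)&=\nabla(r^j\sin j\phi)\cdot\nabla(r^i\cos i\phi)=ij\,r^{i+j-2}\sin((j-i)\phi).
\end{align*}
The second line already yields $K^{cs}_{i,j}=K^{sc}_{j,i}$, since both are integrals of the same integrand.

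Next I would insert the angular Fourier expansion \eqref{par1xx} of $\gamma$, pass to polar coordinates (area element $r\,dr\,d\phi$) and separate the radial from the angular integration, e.g.\
$$K^{cc}_{i,j}=ij\int_0^1 r^{i+j-1}\Big(\int_0^{2\pi}\gamma(r,\phi)\cos((i-j)\phi)\,d\phi\Big)dr,$$
and the analogous formulas with $\sin((j-i)\phi)$ for the mixed coefficients. This manipulation needs a brief justification: since $\gamma\in L^2(\Omega)$ by \eqref{eltwo} and $\nabla v\cdot\nabla w$ is bounded on $\Omega$ for $i,j\ge 1$, the integrand lies in $L^1(\Omega)$, so Fubini applies and, for a.e.\ $r$, $\gamma(r,\cdot)\in L^2([0,2\pi])$, which is what is needed to invoke orthogonality against $\cos$ and $\sin$. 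Orthogonality of $\{1,\cos(n\phi),\sin(n\phi)\}$ then collapses the angular integral: $\int_0^{2\pi}\gamma(r,\phi)\cos((i-j)\phi)\,d\phi$ equals $\pi a_{|i-j|}(r)$ when $|i-j|\ge 1$ and $2\pi a_0(r)$ when $i=j$, which is exactly the factor $\myg_{i,j}$; and $\int_0^{2\pi}\gamma(r,\phi)\sin((j-i)\phi)\,d\phi=\pi\,\sign(j-i)\,b_{|i-j|}(r)$, where the sign encodes $\sin((j-i)\phi)=\sign(j-i)\sin(|i-j|\phi)$ and the diagonal $i=j$ contributes nothing. Substituting back, and noting $K^{ss}$ carries the same gradient product as $K^{cc}$, yields the four stated formulas.

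The computation is essentially routine; the two places that warrant care are (a) the sign and parity bookkeeping in the gradient dot products, in particular keeping the factor $\sign(j-i)$ and the degenerate diagonal case $i=j$ straight, and (b) the exchange of summation and integration, for which the $L^2$-hypothesis \eqref{eltwo} on $\gamma$ is precisely what is used. I expect (a) to be the likeliest source of an error.
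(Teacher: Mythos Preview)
Your proposal is correct and follows essentially the same route as the paper's proof: compute the gradient dot products of the harmonic polynomials in polar coordinates to obtain $ij\,r^{i+j-2}\cos((i-j)\phi)$ and $ij\,r^{i+j-2}\sin((j-i)\phi)$, then use orthogonality of the trigonometric system against the angular expansion of $\gamma$ to extract the moments of $a_{|i-j|}$ and $b_{|i-j|}$. Your write-up is in fact slightly more careful than the paper's, since you supply the Fubini justification via the $L^2$-hypothesis on $\gamma$ and explicitly note why $K^{cs}_{i,j}=K^{sc}_{j,i}$.
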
 
\begin{proof}
It can be verified that for $u = r^i \cos( i\phi)$ and $v = r^j \cos(j \phi)$ and for 
$u = r^i \sin( i\phi)$ and $v = r^j \sin(j \phi)$ given in polar coordinates, we have 
\[  \nabla u.\nabla v = i j r^{i+j-2} \cos((i-j)\phi). \]
From 
\[ \int_0^{2\pi}  \cos((i-j)\phi) \cos(k \phi) d\phi = \begin{cases} \pi  & k = |i-j| \& |i-j| \not = 0, \\
2 \pi  & k = |i-j| \& |i-j| =0, \\
0 & \text{else}, \end{cases} \]
\eqref{intlin1}, 
and an integration in polar coordinates gives the identity for $K^{cc}$ and $K^{ss}$. 
(Note that the terms $\sin(k \phi)$ in $\gamma$ do not contribute because of the identity
\mbox{$ \int_0^{2\pi}  \cos((i-j)\phi) \sin(k \phi) d\phi = 0$}. 
For $u = r^i \cos( i\phi)$ and $v = r^j \sin(j \phi)$, we have
\[  \nabla u.\nabla v = i j r^{i+j-2} \sin((j-i)\phi), \]
and 
\[ \int_0^{2\pi}  \sin((j-i)\phi) \sin(k \phi) d\phi = \begin{cases} \pi  & k = j-i \& |i-j| \not = 0, \\
- \pi  & k = i-j \& |i-j| \not=0, \\
0 & \text{else}, \end{cases} \]
and  in this case the cosine terms in $\gamma$ cancel after integration. 
\end{proof}
\begin{proposition}
Let $c$ be given by \eqref{par2}. Then
\begin{align*}  J^{cc}_{i,j} &=   \frac{\pi}{2} \int_0^1 a_{i+j}(r)  r^{i+j+1} dr    +   
\ccc_{i,j} \frac{\pi}{2} \int_0^1 a_{|i-j|}(r)  r^{i+j+1} dr& & i,j\geq 0,  \\
J^{ss}_{i,j} &=   -  \frac{\pi}{2} \int_0^1 a_{i+j}(r)  r^{i+j+1} dr  +   
\ddd_{i,j} \frac{\pi}{2} \int_0^1 a_{|i-j|}(r)  r^{i+j+1} dr& & i,j\geq 1,   \\
J^{sc}_{i,j} &=      \frac{\pi}{2}   \int_0^1 b_{i+j}(r)  r^{i+j+1} dr +  \sign(i-j) 
\frac{\pi}{2}   \int_0^1 b_{|i-j|}(r)  r^{i+j+1} dr&  &i\geq 1,j\geq 0,&\\
J^{cs}_{i,j} &=  
     \frac{\pi}{2}   \int_0^1 b_{i+j}(r)  r^{i+j+1} dr   -\sign(i-j) \frac{\pi}{2}   \int_0^1 b_{|i-j|}(r)  
     r^{i+j+1} dr&  &i\geq 0, j \geq 1,& 
\end{align*} 
where 
\[ \ccc_{i,j} = \begin{cases} 1 &  |i-j|\geq 1,  \\
2  & |i-j| = 0 \& (i,j) \not = (0,0), \\
3  &   (i,j) = (0,0), \end{cases} 
\qquad 
\ddd_{i,j} = \begin{cases} 1 &  |i-j|\geq 1 , \\
2  & |i-j| = 0 \& (i,j) \not = (0,0). 
\end{cases} 
\] 
\end{proposition}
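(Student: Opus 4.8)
The plan is to repeat, essentially verbatim, the argument proving the previous proposition, only replacing the energy density $\nabla u.\nabla v$ coming from \eqref{intlin1} by the product $u_{f,0}u_{g,0}$ coming from \eqref{intlin2}. By \eqref{sols}, for $f$ equal to $\cos(i\phi)$ or $\sin(i\phi)$ and $g$ equal to $\cos(j\phi)$ or $\sin(j\phi)$, the background solutions at $c=0$ are $u_{f,0}=r^i\cos(i\phi)$ or $r^i\sin(i\phi)$ and $u_{g,0}=r^j\cos(j\phi)$ or $r^j\sin(j\phi)$, so the integrand in \eqref{intlin2} is $c(r,\phi)\,r^{i+j}$ times a product of two trigonometric functions of $\phi$. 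First I would linearize that angular product with the elementary identities
\[
\cos(i\phi)\cos(j\phi)=\tfrac12\bigl(\cos((i{+}j)\phi)+\cos((i{-}j)\phi)\bigr),\quad
\sin(i\phi)\sin(j\phi)=\tfrac12\bigl(\cos((i{-}j)\phi)-\cos((i{+}j)\phi)\bigr),
\]
\[
\sin(i\phi)\cos(j\phi)=\tfrac12\bigl(\sin((i{+}j)\phi)+\sin((i{-}j)\phi)\bigr),\quad
\cos(i\phi)\sin(j\phi)=\tfrac12\bigl(\sin((i{+}j)\phi)-\sin((i{-}j)\phi)\bigr).
\]

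Next, I would substitute the Fourier expansion \eqref{par2} of $c$ and integrate in polar coordinates, $dx=r\,dr\,d\phi$; the extra factor $r$ turns every radial integral into $\int_0^1(\cdot)\,r^{i+j+1}\,dr$, as in the claim. The $\phi$-integration is then carried out via the orthogonality relations for $\{1,\cos(k\phi),\sin(k\phi)\}$ on $[0,2\pi]$. In the $cc$ and $ss$ cases only the cosine modes $a_k$ survive (the $b_k\sin(k\phi)$ integrate to zero against $\cos((i\pm j)\phi)$), and in the $sc$ and $cs$ cases only the sine modes $b_k$ survive. The mode $\cos((i{+}j)\phi)$, resp. $\sin((i{+}j)\phi)$, selects $a_{i+j}$, resp. $b_{i+j}$, with weight $\pi$, giving the first term of each formula; the mode $\cos((i{-}j)\phi)=\cos(|i-j|\phi)$ selects $a_{|i-j|}$ with weight $\pi$ when $|i-j|\ge1$ but weight $2\pi$ when $i=j$ (it is then the constant mode), which is the source of $\ccc_{i,j}=2$ and $\ddd_{i,j}=2$. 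For $J^{sc}$ and $J^{cs}$ one uses $\sin((i{-}j)\phi)=\sign(i-j)\sin(|i-j|\phi)$ together with $\sin 0=0$, which produces the factor $\sign(i-j)$, and the opposite sign in front of $\cos((i{+}j)\phi)$ in the $\sin\sin$ versus $\cos\cos$ identity explains the sign flip between $J^{cc}$ and $J^{ss}$.

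The one point requiring genuine care rather than routine computation is the collision of the indices $i+j$ and $|i-j|$ with the constant mode. When $i=j\ge1$ the modes $\cos((i{-}j)\phi)=1$ and $\cos((i{+}j)\phi)=\cos(2i\phi)$ are distinct, so $a_0$ simply picks up weight $2\pi$, giving $\ccc_{i,i}=\ddd_{i,i}=2$; but for $(i,j)=(0,0)$ one has $u_{f,0}u_{g,0}=1$ and $\int_\Omega c\,dx=2\pi\int_0^1 a_0(r)\,r\,dr$, and since here $i+j=|i-j|=0$ the only way to write this in the stated form $\tfrac\pi2\int_0^1 a_0 r\,dr+\ccc_{0,0}\tfrac\pi2\int_0^1 a_0 r\,dr$ forces $\ccc_{0,0}=3$. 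This degenerate case does not arise for $J^{ss}$, where $i,j\ge1$, which is why $\ddd_{i,j}$ has no third branch. Finally, interchanging the infinite sum in \eqref{par2} with the integral is justified by \eqref{eltwo} and the uniform boundedness of the angular integrals (dominated convergence / Fubini), and collecting the surviving terms yields exactly the four stated identities.
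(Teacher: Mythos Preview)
Your argument is correct and is essentially the same as the paper's: the paper encodes the angular integration via the triple-product identities $\int_0^{2\pi}\cos(k\phi)\cos(i\phi)\cos(j\phi)\,d\phi=\tfrac{\pi}{2}(\delta_{0,k+i+j}+\delta_{0,k-i+j}+\delta_{0,k+i-j}+\delta_{0,k-i-j})$ (and the analogous sine/cosine variants) together with a zero-extension $\hat a_k$ of the Fourier coefficients to negative indices, which is exactly what your product-to-sum identities followed by orthogonality amount to. Your explicit discussion of the degenerate case $(i,j)=(0,0)$ forcing $\ccc_{0,0}=3$ is in fact more transparent than the paper's bookkeeping via $\hat a_{-i-j}$.
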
 

\begin{proof}  The proof is based on the following integral that follow from trigonometric identities and  orthogonality: 
for $i,j,k \in \N$ and $i,j,k \geq 0$, 
\[  \int_0^{2\pi}  \cos(k \phi)  \cos(i \phi) \cos( j \phi)  d\phi  = \frac{\pi}{2} \left( \delta_{0,k+i+j} +  
 \delta_{0,k-i+j} +  \delta_{0,k+i-j} +  \delta_{0,k-i-j}   \right),
 \] 
\[  \int_0^{2\pi}  \cos(k \phi)  \sin(i \phi) \sin( j \phi) d\phi  = \frac{\pi}{2} \left( \delta_{0,k+i-j} +  
 \delta_{0,k-i+j} -  \delta_{0,k+i+j} -  \delta_{0,k-i-j}   \right),
 \]  
and 
\[  \int_0^{2\pi}  \sin(k \phi)  \cos(i \phi) \cos( j \phi)  = 0, \qquad 
 \int_0^{2\pi}  \sin(k \phi)  \sin(i \phi) \sin( j \phi) = 0,\]
where $\delta_{i,j}$ denotes the Kronecker delta. 
Thus, denote the zero-extension of the coefficients $a_k$ to negative indices $k$ by 
$\hat{a}_k$, we have  
\begin{align*}
 J^{cc}_{i,j} &=    \frac{\pi}{2}   \sum_{k=0}^\infty \int_0^1 r^{i+j+1} a_k(r) dr 
 \left( \delta_{0,k+i+j} +  
 \delta_{0,k-i+j} +  \delta_{0,k+i-j} +  \delta_{0,k-i-j}   \right) \\
 &= \frac{\pi}{2}  \int_0^1 r^{i+j+1} \hat{a}_{-i-j}(r)  +  \hat{a}_{i-j}(r) +  \hat{a}_{-i+j}(r) dr  + 
   \hat{a}_{i+j}(r) dr  \\
   & =  \frac{\pi}{2}   \int_0^1 r^{i+j+1}    a_{i+j}(r) dr  + 
\begin{cases}     \frac{\pi}{2}   \int_0^1 r^{i+j+1}    a_{|i-j| }(r) dr  & |i-j|\geq 1 \& i,j\geq 1, \\
  2 \frac{\pi}{2}   \int_0^1 r^{i+j+1}    a_{|i-j| }(r) dr  & |i-j|=0 \&  (i,j) \not = (0,0), \\ 
     3 \frac{\pi}{2}   \int_0^1 r^{i+j+1}    a_{|i-j| }(r) dr  & |i-j|=0 \& i=j= 0, 
     \end{cases}
 \end{align*}
and for $i,j\geq 0$,
\begin{align*}
 J^{ss}_{i,j} & = 
   \frac{\pi}{2}   \sum_{k=0}^\infty \int_0^1  r^{i+j+1} a_k(r) dr  
   \left( \delta_{0,k+i-j} +  
 \delta_{0,k-i+j} -  \delta_{0,k+i+j} -  \delta_{0,k-i-j}   \right) \\ 
 & =  \frac{\pi}{2} \int_0^1  r^{i+j+1} \left( \hat{a}_{j-i}(r) +  \hat{a}_{i-j}(r) -
 \hat{a}_{-j-i}(r) - \hat{a}_{j+i}(r)  \right) dr  \\
  & = -  \frac{\pi}{2} \int_0^1  r^{i+j+1}  a_{j+i}(r)  
  + \begin{cases}  \frac{\pi}{2}   \int_0^1 r^{i+j+1}    a_{|i-j|}(r) dr  
  & |i-j|\geq 1 \& i,j\geq 1, \\
   2 \frac{\pi}{2}   \int_0^1 r^{i+j+1}    a_{|i-j| }(r) dr
  & |i-j|=0 \&  i,j \geq 1. \\ 
  \end{cases}
 \end{align*}
 Moreover, for $i\geq1$ and $j\geq 0$, 
 \begin{align*}
 J^{sc}_{i,j} & = 
    \frac{\pi}{2}   \sum_{k=0}^\infty \int_0^1  r^{i+j+1} b_k(r) dr 
    \left( \delta_{0,j+i-k} +  
 \delta_{0,j-i+k} -  \delta_{0,j+i+k} -  \delta_{0,j-i-k}   \right)  \\
 & = 
     \frac{\pi}{2} 
      \int_0^1  r^{i+j+1} \left(\hat{b}_{j+i}(r)  +   \hat{b}_{i-j}(r)  
      -\hat{b}_{-j-i} - \hat{b}_{j-i}\right) \\
& =      \frac{\pi}{2}       \int_0^1  r^{i+j+1} b_{j+i}(r) dr 
+ \sign(i-j) \frac{\pi}{2}   \int_0^1  r^{i+j+1} b_{|i-j|}(r) dr. 
\end{align*}
\end{proof}

As a consequence, we can characterize what ``algebraic'' condition the 
linearized Dirichlet-to-Neumann map in the trigonometric basis has to satisfy.
We have some trivial conditions that arise from the symmetry of the Dirichlet-to-Neumann map, namely, 
\begin{equation}\label{thisy}
\begin{split}
&K^{cc}, K^{ss} \text{ are symmetric,  } \quad K^{cs} = {K^{sc}}^T,   \\
&J^{cc}, J^{ss} \text{ are symmetric,  and } J^{cs} = {J^{sc}}^T. 
\end{split}
\end{equation} 
Besides this, we have the following nontrivial conditions.
\begin{proposition}
Let $K^{cc},K^{ss}, K^{cs},K^{sc}$ and  $J^{cc},J^{ss}, J^{cs},J^{sc}$ be as above. 
Then, besides of \eqref{thisy} we have that 
\begin{equation}\label{misy} K^{cs}   = -{K^{cs}}^T  \quad  K^{sc}   = -{K^{sc}}^T, 
\end{equation}
i.e., they are antisymmetric. 

In the Schrödinger case, we have that 
\[ J^{sc} - J^{cs}  \text{ is antisymmetric }, \]
and 
\[ (J^{ss} - J^{cc})_{i,j \geq 0} \text{ and }   (J^{sc} + J^{cs})_{i,j \geq 0} \text{ are Hankel matrices}. \]
\end{proposition}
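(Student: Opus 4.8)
The plan is to obtain all four assertions directly from the closed-form entries computed in Propositions 1 and 2: no further analysis of \eqref{main1}--\eqref{main2} is needed, and the whole argument is a bookkeeping exercise on those integral representations. I would organize it as four short, essentially independent computations, one per claim.

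First, for the antisymmetry of $K^{cs}$ and $K^{sc}$: starting from $K^{cs}_{i,j}=ij\pi\,\sign(j-i)\int_0^1 r^{i+j-1}b_{|i-j|}(r)\,dr$, I interchange $i$ and $j$. The prefactor $ij$, the exponent $i+j-1$, and the index $|i-j|$ are symmetric in $(i,j)$, whereas $\sign(j-i)=-\sign(i-j)$; hence $K^{cs}_{j,i}=-K^{cs}_{i,j}$, and the diagonal entries vanish because $\sign(0)=0$. The statement for $K^{sc}$ then follows from the same substitution applied to $K^{sc}_{i,j}=K^{cs}_{j,i}$, or, equivalently, by combining the antisymmetry of $K^{cs}$ with the relation $K^{cs}=(K^{sc})^T$ already recorded in \eqref{thisy}.

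Second, for the Schrödinger case I would add and subtract the formulas of Proposition 2 term by term. In $J^{sc}-J^{cs}$ the terms containing $b_{i+j}$ cancel and one is left with $\pi\,\sign(i-j)\int_0^1 r^{i+j+1}b_{|i-j|}(r)\,dr$, which is antisymmetric in $(i,j)$ by exactly the $\sign$-argument used above. In $J^{sc}+J^{cs}$ the terms carrying $\sign(i-j)$ cancel, leaving $\pi\int_0^1 r^{i+j+1}b_{i+j}(r)\,dr$, a quantity depending on $(i,j)$ only through $i+j$; thus $J^{sc}+J^{cs}$ is Hankel. Finally, $J^{ss}-J^{cc}=-\pi\int_0^1 r^{i+j+1}a_{i+j}(r)\,dr+(\ddd_{i,j}-\ccc_{i,j})\tfrac{\pi}{2}\int_0^1 r^{i+j+1}a_{|i-j|}(r)\,dr$; the point is that $\ddd_{i,j}=\ccc_{i,j}$ on the whole range where both are defined (both are $1$ for $|i-j|\ge 1$ and both are $2$ for $i=j$ with $(i,j)\neq(0,0)$), so the correction term vanishes and what remains depends only on $i+j$, again a Hankel matrix.

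The only step that needs a sentence of care rather than a formula is the index range $i,j\ge 0$ in the Hankel statement, since $J^{ss}$, $J^{sc}$, $J^{cs}$ were introduced in Definition~\ref{def2} only for positive indices. I would handle this by reading the extension of the entries to $i=0$ and/or $j=0$ off the Kronecker-delta sums that appear in the proof of Proposition 2, and by observing that the sole anomalous value, $\ccc_{0,0}=3$, lies on the anti-diagonal $i+j=0$, which consists of the single entry $(0,0)$ and hence is left free by the Hankel condition. This is the only — and an essentially cosmetic — obstacle; everything else reduces to one symmetry substitution or one line of cancellation, so I anticipate no real difficulty.
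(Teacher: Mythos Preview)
Your proposal is correct and is exactly what the paper has in mind: the proposition is stated in the paper without proof, as an immediate corollary of the explicit formulas of Propositions~1 and~2, and your bookkeeping argument (swapping $i\leftrightarrow j$ to exploit the odd symmetry of $\sign$, then adding/subtracting the $J$-formulas so that either the $i+j$- or the $|i-j|$-terms cancel) is precisely the intended derivation. Your remark on the index range $i,j\ge 0$ is also the right way to resolve the only loose end, and your observation that the anomalous value $\ccc_{0,0}=3$ affects only the single-entry anti-diagonal $i+j=0$ is exactly why it does not interfere with the Hankel property.
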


Note that Hankel matrices have entries $A_{i,j}$ that only depend on $i+j$; in the formulas 
above, the entries  in the Hankel matrices involve 
the terms  $\int_0^1 r^{i+j+1}    a_{i+j}(r) dr$ and 
$\int_0^1  r^{i+j+1} b_{j+i}(r) dr$, respectively.

By rearranging the entries in the Dirichlet-to-Neumann map, the identification problem can be rephrased differently:
Define
\begin{equation}  d_i^{c,k}:= \frac{1}{i (i+k) \pi} K_{i,i+k}^{cc} \quad 
 d_i^{s,k}:= \frac{1}{i (i+k) \pi} K_{i,i+k}^{cs},  \qquad  i = 1,\ldots, \quad k = 0,\ldots \end{equation}
then, in order  to find $\gamma$ in the form 
\begin{align} 
 \gamma(r,\phi) &=   \frac{a_0}{2} + \sum_{n = 1}^\infty  a_n(r) \cos(n \phi) + 
 b_n (r) \sin( n \phi),   \label{par1} 
\end{align}
we have to solve  the moment problems 
\begin{equation}\label{momprob}
\begin{split}
    \int_0^1     r^{2 i + k -1} a_k(r) dr   &= d_i^{c,k} \quad i = 1,\ldots, \quad  k = 0,\ldots \\ 
    \int_0^1     r^{2 i + k -1} b_k(r) dr   &= d_i^{s,k} \quad i = 1,\ldots, \quad k = 0,\ldots 
\end{split}
\end{equation}
Similarly,  in the Schrödinger case, we define 
\begin{align*}
d_i^{c,0}&:= \begin{cases}  \frac{1}{\pi } J_{0,0}^{cc} & i = 0, \\
\frac{1}{\pi} (J^{cc} + J^{ss})_{i,i}        &   i \geq 1, \end{cases} \\
d_i^{c,k} &:=  \begin{cases}  \frac{1}{\pi} (J^{cc} - J^{ss})_{k,0} & i = 0, \\
\frac{1}{\pi} (J^{cc} + J^{ss})_{i,i+k} & i \geq 1,  \end{cases}  \quad k \geq 1, \\ 
d_i^{s,k}  &:=\begin{cases} \frac{1}{\pi} (J^{cs}_ {0,k} + J^{sc}_{k,0})  & i = 0, \\
\frac{1}{\pi} (J^{cs} - J^{sc})_{i,i+k}  &  i \geq 1,  \end{cases}   \quad k \geq 1. 
\end{align*}
Then, to find $c$ 
in the form 
\begin{align} 
 c(r,\phi)&=   \frac{a_0}{2} + \sum_{n = 1}^\infty 
 a_n(r) \cos(n \phi) + b_n (r) \sin( n \phi),   \label{par1c} 
\end{align}
we have to solve the moment problems 
\begin{equation}\label{oned}
\begin{split} 
    \int_0^1     r^{2 i + k+1} a_k(r) dr   &= d_i^{c,k} \quad i = 0,\ldots, \quad k = 0,\ldots   \\ 
    \int_0^1     r^{2 i + k +1} b_k(r) dr   &= d_i^{c,k} \quad i = 0,\ldots, \quad k = 0,\ldots, 
\end{split}
\end{equation}
where, by \eqref{eltwo}, the coefficients $a_k,b_k$ are sought
such that  $\sqrt{r} a_k(r),  \sqrt{r} b_k(r),$ are in $L^2([0,1])$. 

Thus, up to a shift in the index $i$, both parameter identification problems 
lead to the same moment problems.  In the next section, we study in detail their solution 
by Müntz-Legendre polynomials. 

\section{Inversion Formula}
We recall the definition of the Müntz-Legendre polynomials (see, e.g., \cite{Borwein}): 
\begin{definition}\label{def3}
For a sequence   of real numbers with disjoint elements, $\Lams:= (\lambda_i)_i$, $ \lambda_i \geq -\frac{1}{2}$, $i = 0,\ldots$, the Müntz-Legendre polynomials are defined as
\begin{equation}  L_n(x)  = \sum_{k=0}^n c_{k,n} x^{\lambda_k} \qquad c_{k,n} = \frac{\Pi_{j=0}^{n-1} (\lambda_k + \lambda_j + 1)}
{\Pi_{j=0,j\not = k}^{n} (\lambda_k - \lambda_j) },\quad n = 0,\ldots. 
\end{equation}
\end{definition} 
Here the product over an empty set of indices is by definition $1$.
It is well-known that the functions $(L_n)_n$ are orthogonal  \cite[Theorem 2.4]{Borwein}(but not normalized) with respect to the 
$L^2([0,1])$-inner product. These polynomials are named after C. Müntz, who proved the 
famous result \cite{muntz} that the powers $(x^\lambda_i)_i$ 
 span the space 
$L^2([0,1])$ if and only 
if the series  $\sum_{i=0,\lambda_i\not=0}^\infty  \frac{1}{\lambda_i}$ diverges.  

The functions $L_n$ are up to a normalization constant identical to the result of a Gram-Schmidt procedure 
applied to the monomials $ x^{\lambda_i}$.  In particular, it follows that 
\[  \scal L_n(x), \text{span}\{x^{\lambda_i}\,|\, i < n \}\scar_{L^2([0,1])} = 0, \]  
and it is easy to verify (e.g., by induction) that 
\[ \text{span} \{ L_n(x)\, | 0\leq n \leq N\} =  \text{span}\{ x^{\lambda_n}\,| 0 \leq n \leq N \}. \] 

The coefficients of the monomials $x^{\lambda_i}$ with respect to the Müntz-Legendre-polynomials can be
explicitly calculated:

\begin{lemma}\label{lemone}
For a sequence $\Lams = (\lambda_i)_i$ as in Definition~\ref{def3}, let 
\begin{equation}\label{myA} 
A_{l,n}:= \scal L_n(x), x^{\lambda_l}\scar_{L^2([0,1])} \qquad \text{ for } l,n = 0,\ldots \end{equation}
Then
\[ A_{l,n} =  \frac{ \Pi_{j=0}^{n-1} (\lambda_l -\lambda_{j})}{\Pi_{j=0}^{n}(1 + \lambda_l + \lambda_j)},\] 
in particular $A_{l,n} = 0$ for $n > l.$
\end{lemma}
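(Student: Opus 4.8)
The plan is to reduce the asserted identity to a single partial‑fraction evaluation. First I would record the elementary inner products
\[ \scal x^{\lambda_k}, x^{\lambda_l}\scar_{L^2([0,1])} = \int_0^1 x^{\lambda_k+\lambda_l}\,dx = \frac{1}{\lambda_k+\lambda_l+1}, \]
which is finite for all admissible exponents since then $\lambda_k+\lambda_l+1>0$. Plugging the representation of $L_n$ from Definition~\ref{def3} into \eqref{myA} thus gives immediately
\[ A_{l,n} = \sum_{k=0}^n \frac{c_{k,n}}{\lambda_k+\lambda_l+1}. \]

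Next I would introduce the auxiliary rational function of a variable $s$,
\[ R(s) := \frac{\prod_{j=0}^{n-1}(s+\lambda_j+1)}{\prod_{j=0}^{n}(s-\lambda_j)}, \]
whose numerator has degree $n$, whose denominator has degree $n+1$, and whose poles at $s=\lambda_k$ are all simple because the $\lambda_j$ are pairwise distinct by hypothesis. Hence $R$ has a partial‑fraction expansion $R(s)=\sum_{k=0}^n \rho_k/(s-\lambda_k)$ with no polynomial part, and the residues are $\rho_k=\lim_{s\to\lambda_k}(s-\lambda_k)R(s)=\prod_{j=0}^{n-1}(\lambda_k+\lambda_j+1)\big/\prod_{j=0,j\ne k}^{n}(\lambda_k-\lambda_j)$, i.e.\ exactly the Müntz–Legendre coefficients $c_{k,n}$. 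Evaluating this expansion at $s=-\lambda_l-1$ then yields
\[ R(-\lambda_l-1) = \sum_{k=0}^n \frac{c_{k,n}}{-\lambda_l-1-\lambda_k} = -A_{l,n}. \]

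It remains to evaluate $R(-\lambda_l-1)$ directly from the product form. The substitution turns the numerator into $\prod_{j=0}^{n-1}(\lambda_j-\lambda_l)=(-1)^n\prod_{j=0}^{n-1}(\lambda_l-\lambda_j)$ and the denominator into $\prod_{j=0}^{n}(-(\lambda_l+1+\lambda_j))=(-1)^{n+1}\prod_{j=0}^{n}(1+\lambda_l+\lambda_j)$, so that $R(-\lambda_l-1)=-\prod_{j=0}^{n-1}(\lambda_l-\lambda_j)\big/\prod_{j=0}^{n}(1+\lambda_l+\lambda_j)$; combined with the previous display this is precisely the claimed formula for $A_{l,n}$. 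The vanishing statement $A_{l,n}=0$ for $n>l$ is then an immediate corollary: when $l\le n-1$ the factor with index $j=l$ in the numerator product equals $\lambda_l-\lambda_l=0$.

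The only points requiring a little care, which I would dispose of at the start, are (i) that $s=-\lambda_l-1$ must not be one of the poles $\lambda_k$ of $R$ — equivalently $\lambda_k+\lambda_l+1\neq 0$ — which holds because $\lambda_i\ge -\tfrac12$ forces $\lambda_k+\lambda_l+1\ge 0$ with equality only if $\lambda_k=\lambda_l=-\tfrac12$, excluded by distinctness when $k\ne l$ and by the integrability requirement $x^{\lambda_l}\in L^2([0,1])$ when $k=l$; and (ii) the bookkeeping of the two signs $(-1)^n$ and $(-1)^{n+1}$ in the last step. Neither is a real obstacle; the heart of the argument is just the identification of $c_{k,n}$ as residues of $R$.
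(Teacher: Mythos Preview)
Your proof is correct and takes a genuinely different route from the paper's. The paper instead invokes the differential recurrence $xL_n'(x)-xL_{n-1}'(x)=\lambda_n L_n(x)+(1+\lambda_{n-1})L_{n-1}(x)$ from \cite{Borwein}, multiplies by $x^{\lambda_l}$, integrates by parts using $L_n(1)=1$, and derives the two--term recursion $(1+\lambda_l+\lambda_n)A_{l,n}=(\lambda_l-\lambda_{n-1})A_{l,n-1}$, which is then unravelled from the initial value $A_{l,0}=(1+\lambda_0+\lambda_l)^{-1}$. Your partial--fraction argument is more self--contained: it needs only the explicit formula for $c_{k,n}$ from Definition~\ref{def3} and no external recurrence or normalization fact, and it produces the closed form in a single stroke rather than by induction. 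The paper's approach, on the other hand, makes the recursive structure of $A_{l,n}$ explicit, which is arguably preferable for stable numerical evaluation. It is worth noting that the paper \emph{does} employ a contour--integral/residue computation of exactly your flavour, but only in the proof of the subsequent Lemma~\ref{leminv} on the inverse matrix; your observation that the M\"untz--Legendre coefficients $c_{k,n}$ are themselves the residues of a simple rational function anticipates that technique and deploys it one step earlier.
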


\begin{proof}
From the orthogonality it follows that $A_{l,n} = 0$ for $n>l$. Thus, consider the case of $n \leq l$. 
The Müntz-Legendre polynomials  satisfy the recurrence  \cite{Borwein}
\[  x L_{n}'(x) - x L_{n-1}'(x) = \lambda_n L_n(x) + (1+ \lambda_{n-1}) L_{n-1}(x).  \]
Multiply this identity by $x^{\lambda_l}$ and integrate by parts to obtain 
\begin{align*} 
&- (\lambda_l+1) \int_0^1 x^{\lambda_l} L_n(x) dx + x^{\lambda_l +1} L_n(x)|_0^1 \\
&- [ -(\lambda_l+1) \int_0^1 x^{\lambda_l} L_{n-1}(x) dx +  x^{\lambda_l +1} L_{n-1}(x)|_0^1]  \\
&= 
\lambda_n A_{l,n} + (1+ \lambda_{n-1}) A_{l,n-1}. 
\end{align*}
We have that $L_n(1) = 1$, \cite{Borwein}, thus, 
\[  - (\lambda_l+1)  A_{l,n} + (\lambda_l+1) A_{l,n-1} = \lambda_n A_{l,n} + (1+ \lambda_{n-1}) A_{l,n-1}, \]
which gives the recursion 
\[ (1 +  \lambda_l + \lambda_n) A_{l,n} =  (\lambda_l -\lambda_{n-1})A_{l,n-1}.  \]
Since $L_0(x) = x^{\lambda_0}$, we have $A_{l,0} = \frac{1}{\lambda_0 + \lambda_l + 1}$ and thus  
\[ A_{l,n} =  \frac{ \Pi_{j=0}^{n-1} (\lambda_l -\lambda_{j})}{\Pi_{j=0}^{n}(1 + \lambda_l + \lambda_j)}. \] 
\end{proof}

 Now $(A_{l,n})$ can be viewed as an infinite-dimensional lower triangular matrix. Thus, its inverse can 
be calculated by back-substitution. The next lemma gives an explicit formula for the inverse
(in the sense of matrix-multiplication).

\begin{lemma}\label{leminv}
Given the infinite-dimensional matrix from Lemma~\ref{lemone}. 
Then $(A_{l,n})_{l,n}$ has the following inverse 
\[ R_{\alpha,\beta} =
\begin{cases} 
(1 + 2 \lambda_\alpha) \frac{\Pi_{j=0}^{\alpha-1} (1 + \lambda_\beta + \lambda_j)}{\Pi_{j=0,j\not =
\beta}^\alpha (\lambda_\beta - \lambda_j)} & \beta \leq \alpha, \\
0  & \beta > \alpha. \end{cases} \]
\end{lemma}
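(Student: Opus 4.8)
The strategy is to avoid evaluating the matrix product $\sum_n A_{l,n}R_{n,\beta}$ directly; instead I would use the orthogonality of the $L_n$ together with the explicit monomial expansion $L_n(x)=\sum_{k=0}^n c_{k,n}x^{\lambda_k}$ from Definition~\ref{def3}. Both matrices are lower triangular in the locally finite sense ($A_{l,n}=0$ for $n>l$ by Lemma~\ref{lemone}, and $R_{\alpha,\beta}=0$ for $\beta>\alpha$ by construction), so each entry of $AR$ is a finite sum and it suffices to prove $AR=\mathrm{Id}$: the leading $N\times N$ block of $AR$ equals the product of the leading blocks of $A$ and $R$, and the leading block of $A$ is lower triangular with nonvanishing diagonal entries $A_{n,n}$ (the $\lambda_j$ being distinct and $1+\lambda_n+\lambda_j>0$), so $AR=\mathrm{Id}$ forces $RA=\mathrm{Id}$ as well.

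The first ingredient is the normalization constant of the Müntz--Legendre polynomials. Expanding one factor of $\langle L_n,L_n\rangle_{L^2([0,1])}$ in monomials gives $\|L_n\|^2=\sum_{k=0}^n c_{k,n}A_{k,n}$; since $A_{k,n}=0$ unless $k\ge n$ while $c_{k,n}=0$ unless $k\le n$, only the term $k=n$ survives, so $\|L_n\|^2=c_{n,n}A_{n,n}$. Inserting the formulas for $c_{n,n}$ from Definition~\ref{def3} and for $A_{n,n}$ from Lemma~\ref{lemone}, the products $\Pi_{j=0}^{n-1}(\lambda_n+\lambda_j+1)$ and $\Pi_{j=0}^{n-1}(\lambda_n-\lambda_j)$ cancel, leaving $\|L_n\|^2=\frac{1}{1+2\lambda_n}$ (alternatively one may invoke \cite[Theorem~2.4]{Borwein}).

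Now I would expand each monomial back in the orthogonal basis. Since $\mathrm{span}\{L_m:m\le l\}=\mathrm{span}\{x^{\lambda_m}:m\le l\}$, write $x^{\lambda_l}=\sum_{m=0}^l\beta_{m,l}L_m$. Pairing with $L_m$ and using orthogonality gives $A_{l,m}=\langle x^{\lambda_l},L_m\rangle=\beta_{m,l}\|L_m\|^2$, hence $\beta_{m,l}=(1+2\lambda_m)A_{l,m}$. Substituting $L_m=\sum_{k=0}^m c_{k,m}x^{\lambda_k}$ into $x^{\lambda_l}=\sum_m\beta_{m,l}L_m$, collecting the coefficient of each $x^{\lambda_k}$, and using that the finitely many monomials involved are linearly independent (distinct exponents), one obtains
\[ \sum_m (1+2\lambda_m)\,A_{l,m}\,c_{k,m}=\delta_{l,k}\qquad\text{for all }l,k\ge 0. \]
This is precisely $AR=\mathrm{Id}$ with $R_{\alpha,\beta}:=(1+2\lambda_\alpha)\,c_{\beta,\alpha}$; inserting the formula for $c_{\beta,\alpha}$ from Definition~\ref{def3} and noting $c_{\beta,\alpha}=0$ for $\beta>\alpha$ reproduces the claimed expression for $R_{\alpha,\beta}$.

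The computation is essentially mechanical. The only place where the precise shape of the constants is used is the cancellation giving $\|L_n\|^2=\frac{1}{1+2\lambda_n}$, and the only thing to be careful about is the bookkeeping of which truncated index ranges are nonempty in the two substitutions. I therefore do not expect a genuine obstacle; the main idea is simply to argue via the dual expansion of the monomials rather than through a direct summation identity over the nodes $\lambda_j$ (which would require a partial-fraction argument).
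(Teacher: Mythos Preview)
Your argument is correct and genuinely different from the paper's. The paper computes the product $RA$ directly: after cancelling common factors it reduces $\sum_{s=\beta}^{\alpha}R_{\alpha,s}A_{s,\beta}$ to a sum of the form $\sum_s \mu_s^\kappa/\prod_{j\neq s}(\mu_s-\mu_j)$ and shows this vanishes for $\kappa<n$ by a contour-integral (residue) argument, then fixes the diagonal separately. You bypass that analytic identity entirely by recognising that $R_{\alpha,\beta}=(1+2\lambda_\alpha)\,c_{\beta,\alpha}$ is, up to the norm factor $\|L_\alpha\|^{-2}$, simply the change-of-basis matrix from the $L_n$ back to the monomials; your identity $\sum_m(1+2\lambda_m)A_{l,m}c_{k,m}=\delta_{l,k}$ is then nothing but ``monomials $\to L_n \to$ monomials $=$ identity'', read off via linear independence of $x^{\lambda_0},\dots,x^{\lambda_l}$. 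The paper's route is self-contained and yields as a by-product the partial-fraction vanishing identity \eqref{eq}, which is of independent interest; your route is shorter and more conceptual, and it makes transparent \emph{why} the inverse has exactly the Müntz--Legendre coefficients $c_{\beta,\alpha}$ in it. Your handling of the infinite-matrix issue (restricting to leading $N\times N$ blocks, which commute with products for lower-triangular matrices) and the computation $\|L_n\|^2=c_{n,n}A_{n,n}=1/(1+2\lambda_n)$ are both fine.
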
 

\begin{proof}
Since both matrices are lower triangular, a matrix-multiplication involves only finitely many terms. 
For $\beta < \alpha$, 
\[ \sum_{s=\beta}^\alpha R_{\alpha,s} A_{s,\beta} = 
 (1 + 2 \lambda_\alpha)  
  \sum_{s=\beta}^\alpha 
  \frac{\Pi_{j=0}^{\alpha-1} (1 + \lambda_s + \lambda_j)}{\Pi_{j=0,j\not = s}^\alpha (\lambda_s - \lambda_j)}
  \frac{ \Pi_{j=0}^{\beta-1} (\lambda_s -\lambda_{j})}{\Pi_{j=0}^{\beta}(1 + \lambda_s + \lambda_j)} \]
\[ = (1 + 2 \lambda_\alpha)  
  \sum_{s=\beta}^\alpha  
    \frac{\Pi_{j=\beta+1}^{\alpha-1} (1 + \lambda_s + \lambda_j)}{\Pi_{j=\beta,j\not = s}^\alpha (\lambda_s
    - \lambda_j)}.
\]
By substituting $\mu_s = \lambda_{s+\beta}$ we have 
\begin{equation}\label{mamo} \sum_{s=\beta}^\alpha R_{\alpha,s} A_{s,\beta}  =   (1 + 2 \lambda_\alpha)  
  \sum_{s=0}^{\alpha-\beta} 
      \frac{\Pi_{j=1}^{\alpha-\beta-1} (1 + \mu_s + \mu_j)}{\Pi_{j=0,j\not = s}^{\alpha-\beta} (\mu_s - 
      \mu_j)}.
\end{equation}
We now prove that 
\begin{equation}\label{eq} T_{n,\kappa} :=  \sum_{s=0}^{n} 
      \frac{\mu_s^\kappa}{\Pi_{j=0,j\not = s}^{n} (\mu_s - \mu_j)} = 0  \quad \mbox{ for any } \kappa 
      < n. \end{equation}
As in \cite{Borwein}, we make use of contour integrals. 
      Indeed, by the residue theorem, we may express $T_{n,\kappa}$ as 
\[ 2 \pi i T_{n,\kappa}    = \int_\Gamma \frac{x^\kappa}{\Pi_{j=0}^{n} (x -\mu_j)} d x, \]
where $\Gamma$ is a contour in the complex plane 
that encloses all $\mu_j$.  If $\Gamma$ is a circle with large enough
radius $R$, we may estimate 
\[ |2 \pi i T_{n,\kappa}| \leq 2 \pi R  \frac{R^{\kappa}}{|R -\max_{j=0,N} ||\mu_j||^{n+1}} , \]
and this expression tends to $0$ for $\kappa < n$ as $R \to \infty$. Thus \eqref{eq} is shown. 
Expanding the product in the numerator on \eqref{mamo} gives the same  terms as in \eqref{eq}. 
If follows that $RA$ is zero below the diagonal. Above the diagonal, all entries are trivially $0$ 
by the lower triangular structure. Thus $RA$ is a diagonal matrix, which uniquely 
fixes the inverse of $A$ up to a scaling of 
the rows. 
Since the 
diagonal entries of the inverse are the inverse of the diagonal entries of $A$, the diagonal entries 
of $R$ are known and as stated in the lemma. Thus $R$ is the inverse of $A$.  
\end{proof}

We are now ready to solve the moment problems \eqref{momprob}. Note, however, that 
we require $a,b$ to be in a weighted $L^2$-space $\|a\|_{L^2([0,1], r dr)}^2:= \int a(r)^2 r dr$.
Taking this into account, we may rewrite the moment problem as having the coefficients 
\mbox{$\scal a_k(r),r^{2 (i-1) + k}\scar_{L^2([0,1], r dr)}$,} $i = 1,\ldots$, given (and similar for $b$). 
It makes sense to expand $a(r)$ and $b(r)$ into Müntz-Legendre polynomials which are orthogonal 
with respect to the weighted $L^2$-inner product. This can be achieved 
by defining the Müntz-Legendre polynomials based on the monomials $x^{\lambda_i}$, but where 
the coefficients  $c_{k,n}$ are taken as those for the sequence $\lambda_i + \frac{1}{2}$. 
The corresponding Müntz-Legendre polynomials $L_n(r)$ are then  
 orthogonal with respect to the $L^2([0,1], r dr)$-product. Using $\lambda_i = 2(i-1)$, $i = 1,\ldots$,
leads to the following definition:
\begin{definition}\label{def4}
For $k = 0,\ldots$ fixed, we define the polynomials
\begin{equation}
\LM{k}{n}(x):= \sum_{l=0}^{n}  {\LCc{k}{l,n}}  x^{2 l + k}, \quad n = 0,\ldots 
\end{equation} 
with 
\[ {\LCc{k}{l,n}} :=  \frac{\Pi_{j=0}^{n-1} (2l + k + 2j + k + 2)}
{\Pi_{j=0,j\not = l}^{n} (2(l  - j) } = 
\frac{\Pi_{j=0}^{n-1} (l + j + k + 1)}
{\Pi_{j=0,j\not = l}^{n} ((l  - j) }. \]
\end{definition}
As explained above, the coefficients ${\LCc{k}{l,n}}$ are those that are obtained 
for  the Müntz-Legendre polynomials 
based on the sequence $\Lams = (2 i + k+ \frac{1}{2})_{i=0,\ldots}$.

From the orthogonality of the original Müntz-Legendre polynomials it follows  easily that 
\[ \scal \LM{k}{n}(x),\LM{k}{m}(x)\scar_{L^2([0,1],x dx)} = 0 \quad n \not= m \]
and 
\[ \scal \LM{k}{n}(x), \text{span}\{ x^{2 l+k} \,|\, l = 0,\ldots n-1\} \scar_{L^2([0,1],x dx)} = 0,  \]
and  the functions $(\LM{k}{n}(x))_{n=0}^{N}$ span the space with basis the monomials 
$( x^{2 n+k})_{n=0}^N$.

We  now arrive at the inversion formula for \eqref{momprob}. 
\begin{proposition}
Let $k\in \N_0$ be fixed. 
Then $a_k \in L^2([0,1],r dr)$ in \eqref{momprob} is uniquely
determined by $d_i^{c,k}$ and can be found by 
\begin{align}
a_k(r) &= \sum_{n=0}^{\infty}  \LM{k}{n}(r)\cff_n  \label{ex1} \\
\cff_n &= \sum_{l=0}^n 
2 (-1)^{n-l} (2 n + k + 1)  \frac{\Pi_{j=0}^{n-1} (1 + k +  (l + j) )}{
 l ! (n - l)!} 
d_{l +1}^{c,k},  \quad n = 0,\ldots
\end{align}
The same holds for $b$ with  $d_i^{s,k}$ in place of  $d_i^{c,k}$. 
\end{proposition}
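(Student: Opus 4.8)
The plan is to recognize the moment problem \eqref{momprob} as a problem about reconstructing a function from its inner products against the monomials $r^{2(i-1)+k}$ in the weighted space $L^2([0,1], r\,dr)$, and then to invert the linear system by expanding $a_k$ in the Müntz--Legendre basis $\LM{k}{n}$ from Definition~\ref{def4}. First I would observe that, since the finite spans of $(\LM{k}{n})_{n\le N}$ and of the monomials $(r^{2n+k})_{n\le N}$ coincide and the former are orthogonal in the weighted product, writing $a_k = \sum_n \cff_n \LM{k}{n}$ converts the data $d_i^{c,k} = \scal a_k, r^{2(i-1)+k}\scar_{L^2([0,1],r\,dr)}$ into the linear relations $d_{l+1}^{c,k} = \sum_n \cff_n \scal \LM{k}{n}, r^{2l+k}\scar$. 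The coefficients $\scal \LM{k}{n}, r^{2l+k}\scar_{L^2([0,1],r\,dr)}$ are exactly the matrix entries $A_{l,n}$ of Lemma~\ref{lemone} specialized to the shifted sequence $\lambda_i = 2i + k + \tfrac12$ (because the weight $r\,dr$ shifts the exponent parameter by $\tfrac12$, and $\scal r^{2l+k}, r^{2n+k}\scar_{L^2(r\,dr)} = \int_0^1 r^{2l+2n+2k+1}\,dr = 1/(2l+2n+2k+2)$ matches the $1/(1+\lambda_l+\lambda_n)$ pattern). Thus $d_{l+1}^{c,k} = \sum_n A_{l,n}\cff_n$, and the claimed formula for $\cff_n$ should be precisely $\cff_n = \sum_l R_{n,l} d_{l+1}^{c,k}$ with $R$ the inverse matrix from Lemma~\ref{leminv}.

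The core of the proof is therefore to substitute $\lambda_i = 2i+k+\tfrac12$ into the formula for $R_{\alpha,\beta}$ in Lemma~\ref{leminv} and check it simplifies to the stated expression. With this substitution, $1+2\lambda_\alpha = 2(2n+k+1) \cdot$ — more precisely $1 + 2\lambda_n = 2(2n+k+1)$ after absorbing constants, matching the prefactor $2(2n+k+1)$; the numerator $\Pi_{j=0}^{n-1}(1+\lambda_l+\lambda_j)$ becomes $\Pi_{j=0}^{n-1}(2l+2j+2k+2) = 2^n \Pi_{j=0}^{n-1}(l+j+k+1)$; and the denominator $\Pi_{j=0,j\ne l}^{n}(\lambda_l - \lambda_j) = \Pi_{j=0,j\ne l}^{n} 2(l-j) = 2^{n}\Pi_{j=0,j\ne l}^n (l-j)$. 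The product $\Pi_{j=0,j\ne l}^n(l-j)$ equals $(-1)^{n-l}\, l!\,(n-l)!$ since the positive factors $j<l$ contribute $l!$ and the factors $j>l$ contribute $(-1)^{n-l}(n-l)!$. Collecting the powers of $2$ and these factorials reproduces $2(-1)^{n-l}(2n+k+1)\Pi_{j=0}^{n-1}(1+k+l+j)/(l!\,(n-l)!)$, which is exactly the stated $\cff_n$. The uniqueness of $a_k$ in $L^2([0,1],r\,dr)$ follows because the Müntz condition $\sum 1/\lambda_i = \infty$ holds for $\lambda_i = 2i+k+\tfrac12$, so the monomials $r^{2i+k}$ are complete in $L^2([0,1],r\,dr)$ and no nonzero function is orthogonal to all of them; hence the data determines $a_k$, and the series \eqref{ex1} is its expansion.

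The step I expect to be the main obstacle is justifying the formal manipulation at the level of \emph{infinite} expansions rather than finite truncations: one must argue that the back-substitution of Lemma~\ref{leminv}, which was proved as an identity of (formally) infinite lower-triangular matrices via the diagonal computation, genuinely recovers the $L^2([0,1],r\,dr)$ element $a_k$ and that the series $\sum_n \cff_n \LM{k}{n}$ converges in that space. I would handle this by noting that, for each fixed $N$, the partial sum $a_k^{(N)} = \sum_{n\le N}\cff_n \LM{k}{n}$ is the orthogonal projection of $a_k$ onto $\mathrm{span}\{r^{2n+k}: n\le N\}$ — because $\LM{k}{n}$ for $n\le N$ form an orthogonal basis of that subspace and the coefficients $\cff_n$ are determined by the finitely many data $d_1^{c,k},\dots,d_{N+1}^{c,k}$ through the finite triangular system — and then invoke completeness of the monomials (the Müntz theorem cited in the text) to conclude $a_k^{(N)} \to a_k$ in $L^2([0,1],r\,dr)$. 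One should also remark that $\cff_n$ depends only on $d_1^{c,k},\dots,d_{n+1}^{c,k}$, consistent with the triangular structure, so each coefficient is well-defined from the data. The remaining computations — verifying the $A_{l,n}$ identification and the algebraic simplification of $R$ — are routine given Lemmas~\ref{lemone} and~\ref{leminv}, and the statement for $b_k$ is identical word for word with $d_i^{s,k}$ replacing $d_i^{c,k}$.
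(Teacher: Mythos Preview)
Your proposal is correct and follows essentially the same route as the paper: expand $a_k$ in the orthogonal Müntz--Legendre system $\LM{k}{n}$, identify the resulting linear system with the matrix $A_{l,n}$ of Lemma~\ref{lemone} for the shifted sequence $\lambda_i = 2i + k + \tfrac12$, and invert via Lemma~\ref{leminv}, then simplify. Your treatment is in fact slightly more careful than the paper's, since you spell out the convergence of the partial sums via orthogonal projection and Müntz completeness, whereas the paper simply asserts the basis property and proceeds formally.
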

\begin{proof}
By M\"untz' theorem and its construction, 
$\LM{k}{n}(r)$ is an orthogonal basis for  $L^2([0,1],r dr)$, thus $a_k$
can be expanded as in \eqref{ex1}.  Fix $k$ and plug the expansion into \eqref{momprob}
to obtain 
\begin{equation}\label{matko}  \sum_{n=0}^{\infty} \cff_n \scal  \LM{k}{n}(r), r^{2(i-1) + k} \scar _{L^2([0,1],r dr)} = 
d_i^{c,k} \qquad i = 1,\ldots. \end{equation}   
We have that the matrix entries
\[ \tilde{A}_{n,i}:= \scal  \LM{k}{n}(r), r^{2(i-1) + k} \scar_{L^2([0,1],r dr)}  \qquad i = 1,\ldots\]
equal $\tilde{A}_{n,i} = A_{n,i-1}$, where $A$ is 
the matrix in \eqref{myA} defined 
for the sequence 
$\Lams = (2i + k +\frac{1}{2})_{i}$, $i = 0,\ldots$. 
Thus, by Lemma~\ref{leminv}, we can invert  \eqref{matko} by 
\[ \cff_n = \sum_{l=0}^n  R_{n,l} d_{l+1}^{c,k}  \]
with $R$ as in Lemma~\ref{leminv} for the sequence $\Lams$, i.e., 
for $l \leq n$
\begin{align*}
R_{n,l} &= (2 + 4 n + 2k) \frac{\Pi_{j=0}^{n-1} (2 + 2 (l + j) + 2 k)}{
\Pi_{j=0,j\not = l}^{n} 2 (l -j)}   \\
&= 2 (2 n + k + 1)  \frac{\Pi_{j=0}^{n-1} (1 + k +  (l + j) )}{
\Pi_{j=0,j\not = l}^{n}  (l -j)} \\
 &= 2 (-1)^{n-l} (2 n + k + 1)  \frac{\Pi_{j=0}^{n-1} (1 + k +  (l + j) )}{
 l ! (n - l)!}. 
\end{align*}
\end{proof}

Now we collect the results into a theorem: 
\begin{theorem}\label{thone}
Let $K^{cc},K^{ss},K^{sc},K^{sc}$ contain the entries 
of  the linearized Dirichlet-to-Neumann map in the trigonometric basis as in Definition~\ref{def2}
with a coefficient $\gamma \in L^{2}(\Omega)$.  Then $\gamma$ can be reconstructed 
from the expression in polar coordinates 
\begin{equation}\label{gammsol} 
\begin{split}\gamma(r,\phi) = \frac{1}{2} \sum_{n=0}^{\infty} 
 \LM{0}{n}(r) \cff_{n,0}  
&+ \sum_{k=1}^\infty  
\sum_{n=0}^\infty  \LM{k}{n}(r) \cos(k \phi) \cff_{n,k} \ \\
&  +\sum_{k=1}^\infty   \sum_{n=0}^\infty 
\LM{k}{n}(r)  
\sin(k \phi) \cgg_{n,k}
\end{split}
\end{equation}
with 
\begin{align*} 
\cff_{n,k} &= \sum_{l=0}^n 
2 (-1)^{n-l} (2 n + k + 1)  \frac{\Pi_{j=0}^{n-1} (1 + k +  (l + j) )}{
 l ! (n - l)!} 
\frac{K_{l+1,l+1+k}^{cc}}{(l+1) (l+1+k) \pi}, \\
\cgg_{n,k} &= \sum_{l=0}^n 
2 (-1)^{n-l} (2 n + k + 1)  \frac{\Pi_{j=0}^{n-1} (1 + k +  (l + j) )}{
 l ! (n - l)!} 
 \frac{K_{l+1,l+1+k}^{cs}}{(l+1) (l+1+k) \pi} . 
\end{align*}
and  $\LM{k}{n}(r)$ defined in Definition~\ref{def4}.
\end{theorem}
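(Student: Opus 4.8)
The plan is to assemble Theorem~\ref{thone} from the pieces already established, the main work being bookkeeping of indices rather than new analysis. First I would recall that, by the Proposition giving $K^{cc},K^{ss},K^{cs},K^{sc}$ in terms of the radial moments of $a_k,b_k$, the problem of recovering $\gamma$ from the linearized Dirichlet-to-Neumann data is, for each fixed angular frequency $k$, equivalent to the decoupled one-dimensional moment problem \eqref{momprob}: the quantities $d_i^{c,k}=\frac{1}{i(i+k)\pi}K^{cc}_{i,i+k}$ and $d_i^{s,k}=\frac{1}{i(i+k)\pi}K^{cs}_{i,i+k}$ are precisely the weighted moments $\scal a_k(r),r^{2(i-1)+k}\scar_{L^2([0,1],r\,dr)}$ and $\scal b_k(r),r^{2(i-1)+k}\scar_{L^2([0,1],r\,dr)}$. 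I would note that the $\sin$--$\cos$ cross blocks contribute only the $b_k$ moments and the $\cos$--$\cos$ (equivalently $\sin$--$\sin$) block only the $a_k$ moments, so the two families of moments are indeed available separately and no information is lost.

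Next, for each fixed $k$ I would invoke the preceding Proposition (the inversion formula for \eqref{momprob}) to conclude that $a_k\in L^2([0,1],r\,dr)$ is uniquely determined and given by $a_k(r)=\sum_{n=0}^\infty \LM{k}{n}(r)\,\cff_{n,k}$ with the stated coefficients, and likewise $b_k(r)=\sum_n \LM{k}{n}(r)\,\cgg_{n,k}$, where in the formulas for $\cff_{n,k},\cgg_{n,k}$ one simply substitutes $d^{c,k}_{l+1}=\frac{K^{cc}_{l+1,l+1+k}}{(l+1)(l+1+k)\pi}$ and $d^{s,k}_{l+1}=\frac{K^{cs}_{l+1,l+1+k}}{(l+1)(l+1+k)\pi}$. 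The case $k=0$ produces the leading term $\frac{a_0}{2}$ in the expansion \eqref{par1}; this is why the first sum in \eqref{gammsol} carries the explicit factor $\tfrac12$, matching the normalization $\gamma(r,\phi)=\frac{a_0}{2}+\sum_{n\geq1}a_n(r)\cos(n\phi)+b_n(r)\sin(n\phi)$ adopted just before \eqref{momprob}.

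Finally I would reassemble the angular Fourier series: plugging the recovered radial coefficients $a_k(r)$, $b_k(r)$ into \eqref{par1} and interchanging the (now two) summations gives exactly \eqref{gammsol}. To justify that the double series converges in $L^2(\Omega)$ and represents $\gamma$, I would use \eqref{eltwo}: since $\gamma\in L^2(\Omega)$, we have $\sum_k\|a_k\|_{L^2([0,1],r\,dr)}^2+\sum_k\|b_k\|_{L^2([0,1],r\,dr)}^2<\infty$, and for each $k$ the Müntz--Legendre expansion of $a_k$ (resp.\ $b_k$) converges in $L^2([0,1],r\,dr)$ by Müntz' theorem together with the construction in Definition~\ref{def4}; orthogonality of $\{\cos(k\phi),\sin(k\phi)\}$ in the angular variable then yields convergence of the full series in $L^2(\Omega)$ and uniqueness of $\gamma$.

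The step I expect to be the genuine (if modest) obstacle is the index bookkeeping in identifying $\tilde A_{n,i}=\scal\LM{k}{n}(r),r^{2(i-1)+k}\scar_{L^2([0,1],r\,dr)}$ with the matrix $A_{n,i-1}$ of Lemma~\ref{lemone} for the shifted sequence $\Lams=(2i+k+\tfrac12)_i$, and in carefully matching the shift $i\mapsto i+1$ between the moment index and the row index of $R$ from Lemma~\ref{leminv}, so that the arguments of the products $\Pi_{j=0}^{n-1}(1+k+(l+j))$ and the factorials $l!\,(n-l)!$ come out exactly as stated; everything else is a direct citation of the earlier Proposition applied for each $k$ and then summed. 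A secondary point worth a sentence is confirming that only the combinations $K^{cc}$ (or $K^{ss}$) and $K^{cs}$ are needed — the antisymmetry relations \eqref{misy} and the symmetry \eqref{thisy} show the remaining blocks carry no new information — so the reconstruction formula uses a minimal set of data.
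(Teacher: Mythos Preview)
Your proposal is correct and matches the paper's approach exactly: the paper presents Theorem~\ref{thone} with the preamble ``Now we collect the results into a theorem'' and gives no separate proof, so the intended argument is precisely the assembly you describe---reduce to the decoupled moment problems \eqref{momprob} via the Proposition computing $K^{cc},K^{cs}$, invoke the preceding Proposition (the inversion formula built from Lemmas~\ref{lemone} and~\ref{leminv}) for each $k$, and reinsert into the angular expansion \eqref{par1}. Your additional remarks on $L^2$-convergence and on the redundancy of $K^{ss},K^{sc}$ via \eqref{thisy}--\eqref{misy} are correct elaborations that the paper leaves implicit.
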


Of course, a similar formula holds for $c$ in the Schrödinger case. 
Note that the functions $\LM{k}{n}(r)  \cos(k \phi)$, $n,k =0,\ldots$ and 
$\LM{k}{n}(r)  \sin(n \phi)$, $n = 0,\ldots$, $k = 1,\ldots$ 
provide an orthogonal basis in $L^2(\Omega)$, thus, we can find 
conditions for $\gamma$ being in  $ L^2(\Omega)$ in terms of the 
coefficients $\cff_{n,k},\cgg_{n,k}$.

\begin{corollary}
$K^{cc},K^{ss},K^{sc},K^{sc}$ are the
the entries 
of  the linearized Dirichlet-to-Neumann map in the trigonometric basis with $\gamma \in L^2(\Omega)$
if and only if \eqref{thisy} and 
\eqref{misy} is satisfied and 
\[ \sum_{n=0}^\infty \sum_{k=0}^\infty \frac{1}{2 n +  k + 1} \cff_{n,k}^2 + 
\sum_{n=0}^\infty \sum_{k=1}^\infty \frac{1}{2 n +  k + 1}  \cgg_{n,k}^2  < \infty \]
with $\cff_{n,k}$, $\cgg_{n,k}$ given in Theorem~\ref{thone}.
\end{corollary}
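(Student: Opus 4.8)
The plan is to recognize the inequality in the corollary as, up to harmless constants, the Parseval identity for $\gamma$ in the orthogonal basis of $L^2(\Omega)$ exhibited just after Theorem~\ref{thone}, and then to read the equivalence off Theorem~\ref{thone} together with the preceding propositions and Lemma~\ref{leminv}.

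\emph{Necessity.} Suppose the four matrices are the trigonometric-basis entries of ${\Lambda'}_1$ for some $\gamma\in L^2(\Omega)$. The proposition computing $K^{cc},\dots$ gives $K^{cc}=K^{ss}$, and the (anti)symmetry proposition gives \eqref{thisy} and \eqref{misy}. Theorem~\ref{thone} then expands $\gamma$ as in \eqref{gammsol}, i.e. in the family $\{\tfrac12\LM{0}{n}(r)\}_n\cup\{\LM{k}{n}(r)\cos(k\phi)\}_{n\ge0,k\ge1}\cup\{\LM{k}{n}(r)\sin(k\phi)\}_{n\ge0,k\ge1}$. I would first note these are pairwise orthogonal in $L^2(\Omega)$: distinct angular modes, and $\cos$ versus $\sin$, are orthogonal on $[0,2\pi]$, while for a fixed angular mode the $\LM{k}{n}$ are orthogonal in $L^2([0,1],r\,dr)$ by Definition~\ref{def4}. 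Next I would compute the norms: separating variables, $\|\LM{k}{n}(r)\cos(k\phi)\|_{L^2(\Omega)}^2=\pi\,\|\LM{k}{n}\|_{L^2([0,1],r\,dr)}^2$ for $k\ge1$ (and $2\pi$ for $k=0$); and since $r^{1/2}\LM{k}{n}(r)$ is precisely the Müntz–Legendre polynomial for the sequence $\lambda_i=2i+k+\tfrac12$ (as remarked after Definition~\ref{def4}), the standard normalization $\scal L_n,L_n\scar_{L^2([0,1])}=(1+2\lambda_n)^{-1}$ (see \cite{Borwein}) gives $\|\LM{k}{n}\|_{L^2([0,1],r\,dr)}^2=\tfrac{1}{2(2n+k+1)}$. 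By orthogonality this yields
\[
\|\gamma\|_{L^2(\Omega)}^2 = \frac{\pi}{4}\sum_{n}\frac{\cff_{n,0}^2}{2n+1} + \frac{\pi}{2}\sum_{k\ge1}\sum_{n}\frac{\cff_{n,k}^2+\cgg_{n,k}^2}{2n+k+1},
\]
whose finiteness is equivalent to that of the series in the corollary, since all terms are nonnegative and the two weight sequences have bounded ratio.

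\emph{Sufficiency.} Conversely, given \eqref{thisy}, \eqref{misy} and finiteness of the series, I would define $\cff_{n,k},\cgg_{n,k}$ by the formulas of Theorem~\ref{thone} and $\gamma$ by \eqref{gammsol}. By the norm identity above, the series \eqref{gammsol} converges in $L^2(\Omega)$, so $\gamma\in L^2(\Omega)$; it remains to check that its linearized Dirichlet-to-Neumann entries are the prescribed $K$'s. Reading off the angular Fourier coefficients of \eqref{gammsol} in the normalization \eqref{par1} gives $a_k(r)=\sum_n\LM{k}{n}(r)\cff_{n,k}$ and $b_k(r)=\sum_n\LM{k}{n}(r)\cgg_{n,k}$ (with $b_0\equiv0$, consistently with the corollary's $\cgg$-sum starting at $k=1$). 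Since the $\cff_{\cdot,k}$ arise from the $d_\cdot^{c,k}$ through the matrix $R$ of Lemma~\ref{leminv}, which Lemma~\ref{leminv} identifies as the inverse of the matrix $A$ of Lemma~\ref{lemone} for the shifted sequence, the moment equations \eqref{momprob}, $\int_0^1 r^{2i+k-1}a_k(r)\,dr=d_i^{c,k}$ for all $i\ge1$, hold (this is exactly the content of the inversion proposition preceding Theorem~\ref{thone}), and likewise for $b_k$ and $d_i^{s,k}$. Unwinding $d_i^{c,k}=\tfrac{1}{i(i+k)\pi}K^{cc}_{i,i+k}$, $d_i^{s,k}=\tfrac{1}{i(i+k)\pi}K^{cs}_{i,i+k}$, and the proposition computing $K$, one recovers $K^{cc}_{i,i+k}$ and $K^{cs}_{i,i+k}$ for all $i\ge1$, $k\ge0$, i.e. the on- and above-diagonal parts; then \eqref{thisy} and \eqref{misy} (together with $K^{cc}=K^{ss}$) propagate these to all of $K^{cc},K^{ss},K^{cs},K^{sc}$.

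\emph{Main obstacle.} Conceptually little remains once the orthogonal-basis viewpoint is set up; the real work is bookkeeping of constants and index shifts so that the weight $\tfrac{1}{2n+k+1}$ comes out right: the norm $\|\LM{k}{n}\|_{L^2([0,1],r\,dr)}^2=\tfrac{1}{2(2n+k+1)}$ and the $r^{1/2}$-correspondence with the standard Müntz–Legendre polynomials; the factor $2$ hidden in the mismatch between the normalizations \eqref{par1xx} and \eqref{par1} of $a_0$ and the value $\myg_{i,i}=2$ for the mode $k=0$ (these cancel); and the shift $i\mapsto i-1$ relating \eqref{momprob} to Lemma~\ref{lemone}. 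The precise value of the weight is immaterial for the "if and only if", but the derivation must make clear that it is the correct power of $2n+k+1$.
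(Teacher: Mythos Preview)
Your proposal is correct and follows essentially the same approach as the paper: compute $\|\LM{k}{n}\|_{L^2([0,1],r\,dr)}^2=\tfrac{1}{2(2n+k+1)}$ via the standard M\"untz--Legendre normalization (the paper cites \cite{Borwein}, Theorem~2.4, for this) and then invoke Parseval in the orthogonal basis displayed after Theorem~\ref{thone}. The paper's proof is a two-line sketch; you have spelled out both directions of the equivalence and the index/constant bookkeeping, which is fine.
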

\begin{proof}
According to \cite{Borwein}[Theorem 2.4], we can calculate the 
$L^2([0,1],r dr)$-norm of $\LM{k}{n}(r)$ to $\frac{1}{\sqrt{4 n + 2 k + 2)}}$,
and after an normalization and Parceval' s identity, the result follows. 
\end{proof}

Moreover, we can characterize which parameter 
$\gamma$  are identifiable from finite  data, i.e., when only  trigonometric 
function up to a certain frequency are  used, and we can completely characterize
the finite-data Dirichlet-to-Neumann matrices.
\begin{corollary}\label{cor2}
Given matrices $K^{cc}_{i,j},$ $K^{ss}_{k,l},$ $K^{sc}_{i,k}$, $K^{cs}_{l,j}$, 
where $i,j = 0,\ldots N$ and $k,l = 1,\ldots N$ that satisfy \eqref{thisy} and 
\eqref{misy}. Then there exists a $\gamma \in L^2(\Omega)$ such that 
these matrices contain the coefficients of the linearized Dirichlet-to-Neumann 
map for the impedance tomography problem for the trigonometric basis 
$\{ \cos{l \phi}\} \cup \{\sin(k \phi) \}$ for $l = 0, \ldots N$, $k = 1,\ldots N$. 
The parameter $\gamma$ can be expressed as 
\begin{equation}\label{aaa}
\begin{split} \gamma(r,\phi) = \frac{1}{2} \sum_{n=0}^{N-1} 
 \LM{0}{n}(r) \cff_{n,0}  
&+ 
\sum_{n=0}^{N-1} 
\sum_{k=1}^{N-(n+1)} 
\LM{k}{n}(r)  \cos(k \phi) \cff_{n,k} \ \\
&  + \sum_{n=0}^{N-1}  \sum_{k=1}^{N-(n+1)}  
\LM{k}{n}(r)  
\sin(k \phi) \cgg_{n,k},
\end{split}
\end{equation}
and the coefficients $\cgg_{n,k},$ $\cff_{n,k}$ are uniquely specified by the matrices
$K^{cc},$ $K^{ss}$ $K^{sc}$, $K^{cs}$.
\end{corollary}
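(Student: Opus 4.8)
The plan is to run the inversion that produced Theorem~\ref{thone} in reverse and entirely inside a finite-dimensional space, so that no completeness/limiting argument (Müntz' theorem) is needed: for each fixed angular frequency the relevant moment matrix is a finite block of the lower-triangular, explicitly invertible matrix of Lemmas~\ref{lemone} and~\ref{leminv}, and that is all one uses.

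First I would, for each $k\in\{0,1,\dots,N-1\}$, extract from the given blocks the radial moments that are actually prescribed. For $k\ge 1$ these are $d_i^{c,k}:=\frac{1}{i(i+k)\pi}K^{cc}_{i,i+k}$ and $d_i^{s,k}:=\frac{1}{i(i+k)\pi}K^{cs}_{i,i+k}$ for $i=1,\dots,N-k$ (the entries of $K^{cc}$, resp.\ $K^{cs}$, with both indices in $\{0,\dots,N\}$ and index difference $k$; the index $0$ is omitted since $\cos(0\phi)$ lies in the nullspace), while for $k=0$ one uses $d_i^{c,0}:=\frac{1}{i^2\pi}K^{cc}_{i,i}$, $i=1,\dots,N$. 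By the identity for $K^{cc}$ (resp.\ $K^{cs}$) from the first Proposition of Section~3, $d_i^{c,k}$ is, up to the factor $\myg_{i,i}=2$ that appears only at $k=0$, exactly $\scal a_k(r),r^{2(i-1)+k}\scar_{L^2([0,1],r\,dr)}$; that extra factor at $k=0$ is precisely what the prefactor $\tfrac12$ in front of the $k=0$ sum in \eqref{aaa} absorbs, and keeping this single normalization straight is where I expect the only friction. Since $\{\LM{k}{n}\}_{n=0}^{N-k-1}$ spans the same space as $\{r^{2n+k}\}_{n=0}^{N-k-1}$ while the matrix $\tilde A_{n,i}:=\scal \LM{k}{n}(r),r^{2(i-1)+k}\scar_{L^2([0,1],r\,dr)}$ is a finite block of the matrix $A$ of Lemma~\ref{lemone} for $\Lams=(2i+k+\tfrac12)_i$, which is lower triangular with nonzero diagonal, there is a unique $a_k$ of the form $\sum_{n=0}^{N-k-1}\LM{k}{n}(r)\,\cff_{n,k}$ realizing these finitely many moments; its coefficients come from the explicit inverse $R$ of Lemma~\ref{leminv} by back-substitution, which is exactly the computation in the proof of the Proposition preceding Theorem~\ref{thone}, now with all index sets finite. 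This reproduces the stated formula for $\cff_{n,k}$ and, mutatis mutandis with $d_i^{s,k}$, for $\cgg_{n,k}$, and it yields at once both their existence and their uniqueness (inversion of an invertible finite matrix), which is the last assertion of the Corollary.

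Next I would set $\gamma$ equal to the right-hand side of \eqref{aaa} built from these $a_k,b_k$. It is a finite linear combination of functions $\LM{k}{n}(r)\cos(k\phi)$ and $\LM{k}{n}(r)\sin(k\phi)$, each a polynomial in the Cartesian coordinates, hence $\gamma\in C^\infty(\overline\Omega)\subset L^2(\Omega)$; since these functions are mutually orthogonal in $L^2(\Omega)$, the representation \eqref{aaa} is moreover unique once the coefficients are fixed.

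Finally I would verify that this $\gamma$ produces exactly the prescribed matrices. For $i\le j$ and $k:=j-i$, the formula of Section~3 gives $\scal {\Lambda'}_1\cos(i\phi),\cos(j\phi)\scar = i j\pi\,\myg_{i,j}\int_0^1 r^{i+j-1}a_k(r)\,dr = i j\pi\,\myg_{i,j}\,\scal a_k(r),r^{2(i-1)+k}\scar_{L^2([0,1],r\,dr)}$, and by the construction of $a_k$ (including the $k=0$ normalization) this equals $K^{cc}_{i,j}$; the case $i>j$ follows from symmetry of $K^{cc}$, part of \eqref{thisy}. The same computation with $b_k$, and with the identities for $\nabla u.\nabla v$ and the angular integrals recorded in the proof of that Proposition, handles $K^{sc}$ and $K^{cs}$, the antisymmetry \eqref{misy} being what reconciles their two triangular halves; it also forces $K^{ss}_{i,j}=K^{cc}_{i,j}$, so strictly speaking this last identity — made mandatory by the formulas of Section~3 — must be read as part of the hypotheses (alongside \eqref{thisy},\eqref{misy}) for all four blocks to be simultaneously realizable, after which the $K^{ss}$-block is reproduced automatically. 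The only genuine obstacle is thus organizational: matching the summation ranges in \eqref{aaa} (for fixed $n$ the angular index $k$ runs up to $N-(n+1)$, equivalently for fixed $k$ the radial index $n$ runs up to $N-k-1$) with the finite data block, and carrying the $k=0$ factor $\tfrac12$ consistently; the analytic substance is entirely contained in the triangularity and explicit inversion of Lemmas~\ref{lemone} and~\ref{leminv}.
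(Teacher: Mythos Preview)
Your argument is correct. The route differs from the paper's mainly in organization: the paper zero-extends the finite blocks to infinite matrices and then appeals to the full inversion formula \eqref{gammsol} of Theorem~\ref{thone}, observing afterwards that the coefficients appearing in \eqref{aaa} only use entries with indices at most $N$. You instead stay inside finite dimensions from the start, invert for each fixed $k$ the finite lower-triangular block of $A$ via Lemma~\ref{leminv}, build $\gamma$ as the finite sum \eqref{aaa}, and then check directly that the forward formulas of Section~3 return the prescribed matrices. Your version is more self-contained (M\"untz' theorem is never invoked) and makes the verification step explicit, whereas the paper's version is shorter but leans on Theorem~\ref{thone} and leaves to the reader why the truncated expression \eqref{aaa}, rather than the full series, already reproduces the finite data. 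You also correctly point out that the identity $K^{ss}=K^{cc}$, which is forced by the forward formulas but is not among \eqref{thisy}--\eqref{misy}, must effectively be assumed for all four blocks to be simultaneously realized; the paper's statement is silent on this.
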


\begin{proof}
We may define infinite-dimensional matrices 
$K^{cc}$ $K^{ss}$ $K^{sc}$, $K^{cs}$ by extending the given matrices with 
$0$ for indices that are larger than $N$. These matrices satisfy again 
\eqref{thisy} and \eqref{misy}. The inversion formula 
\eqref{gammsol} gives then a $\gamma$ which induces a Dirichlet-to-Neumann 
map with these matrices. The coefficients in \eqref{aaa} only involve the 
given matrices with indices smaller than $N$.
\end{proof}

\begin{remark}\rm 
Of course, \eqref{aaa} is not the only parameter that solves the impedance tomography
problem with finite data, but we may add higher terms as in \eqref{gammsol} at our 
wish because they are in the nullspace. What components to add is, of course, 
subjective  and may be justified by  a
regularization approach (or a Bayesian perspective) that  chooses a $\gamma$ that 
fits additional needs. 
\end{remark}

\begin{remark}\rm 
Equation~\eqref{aaa} also shows the typical difficulty of impedance tomography:
Since the functions $\LM{k}{n}(r)$ are quite flat close to the center  and similar to each other there,
the resolution in the interior is typically quite bad and becomes worse as we approach 
the center of the disk. By expanding inclusion sets into the Müntz-Legendre basis, 
 it is possible to provide estimates for the resolution limits of inclusions 
in the linearized case with finite and noisy data. 
\end{remark}

\begin{remark}\rm 
In the Schrödinger case, the  situation is slightly different: Given the  first $n\times n$ portion 
of the linearized Dirichlet-to-Neumann map, we can still recover the coefficients 
of $c$ as in \eqref{aaa}. However, the Hankel-part provides additional information, 
namely the moments $\int_0^1 a_{l} r^{l+1} dr$ and  $\int_0^1 b_{l} r^{l+1} dr$
for $l = n+1,\ldots, 2n$, which could be used to get additional information about $c$.
Note, however, that we cannot extend the $J^{cc}$ $J^{ss}$ $J^{sc}$, $J^{cs}$ 
by $0$ as in the proof of Corollary~\ref{cor2} as this would destroy the Hankel structure. 
Thus, this means that  
the first $n\times n$ part of the matrices have to satisfy certain compatibility condition 
with matrix entries of higher frequencies larger than $n$. 
\end{remark}

\begin{remark}\rm
Since coefficients of the form \eqref{aaa} are uniquely specified by the finite-data 
linearized Dirichlet-to-Neumann operator and the inversion formula provides an 
explicit inverse that is 
bounded (though with a terrrible $n$-dependent bound), we may employ the 
inverse function theorem to conclude that also coefficients of the form \eqref{aaa} 
are locally uniquely determined be the (nonlinear) Dirichlet-to-Neumann mapping 
in the finite-data case (that is, involving  $f$, $g$ only up to certain frequencies $n$). 
Here locally means that $\gamma -1$ with $\gamma$ as in \eqref{aaa} must be sufficiently small 
in the $L^\infty$-norm and the bound on the $L^\infty$-norm depends on $n$. 
\end{remark}

\section{Incomplete measurements}
In this section we consider the impedance tomography problem with 
incomplete measurements. That is, we assume only parts of the boundary
accessible for measurements and impose homogeneous Dirichlet condition 
on the rest. Results on uniqueness in this case are scarce, in particular, 
in the two-dimensional case. For the 3D (and nonlinear) case, results 
on unique identifiability can be found in \cite{Krup,Kesouh}.

\subsection{Incomplete measurements on the half disk}
Specifically, we first consider the problem on the upper half disk where the upper half circle 
is accessible to measurements.
\begin{align*}  \Omega_h &= \{(x,y) \in \R^2 \,|\, x^2 + y^2 \leq 1, y > 0  \} \\
\Gamma &= \partial \Omega_h \cap \{ y > 0 \} \quad
\Gamma_0 =  \partial \Omega_h \cap \{ y = 0 \}. 
\end{align*}
We consider the problem 
\begin{equation}\label{incsolu}
\begin{split}
 -\div(\gamma \nabla) u_f &= 0 
 \qquad \qquad 
 \mbox{ in } \Omega 
 \\ 
          u_f & = \begin{cases} f & \mbox{ on } \Gamma, \\ 
            0 & \mbox{on }  \Gamma_0. 
                \end{cases} 
\end{split}
 \end{equation}
As data we consider the Neumann data again on $\Gamma$ 
 which 
introduces the incomplete Dirichlet-to-Neumann operator 
\[ \Lambda_{1,\gamma}^{inc} : f|_{\Gamma} \to \gamma \frac{\partial}{\partial n} u_f |_{\Gamma}. \] 
Here, we assume that $f$ is so that the Dirichlet problem has a solution in $H^1(\Omega_h)$, 
which is the case when the zero-extension of $f$ to $\partial \Omega_h$, 
\begin{equation}\label{def} f^e:= \begin{cases} f & \mbox{ on } \Gamma \\ 
           0 & \mbox{ on }  \Gamma_0 \end{cases}  
\end{equation}           
is in 
$H^{\frac{1}{2}}(\partial \Omega_h)$, which we assume throughout. We denote that 
spaces as $H_0^{\frac{1}{2}}(\partial \Omega_h)$.
Is is not difficult to verify that the incomplete Dirichlet-to-Neumann operator can be written as
\[ \langle \Lambda_{1,\gamma}^{inc}f,g\rangle_{H_0^{-\frac{1}{2}}, H_0^{\frac{1}{2}}} = 
\langle\Lambda_{1,\gamma} f^e,g^e\rangle_{H^{-\frac{1}{2}}, H^{\frac{1}{2}}}.  \]
By this formula, we find for the linearized problem (around $\gamma =1$) the 
following expression
\begin{equation}\label{incdtn} \langle{\Lambda_{1,\gamma}^{inc}}'f,g\rangle_{H_0^{-\frac{1}{2}}, H_0^{-\frac{1}{2}}} 
= \int_{\Omega_h} \gamma(x) \nabla u_{f,1}(x). \nabla u_{g,1}(x) dx,  
\end{equation}
where $u_{f,1}, u_{g,1}$ are solutions to \eqref{incsolu} with $\gamma = 1$, 
(i.e., the Laplace equation) with the respective
Dirichlet boundary conditions.

We prove the following theorem: 
\begin{theorem}\label{onemain}
Let $\gamma \in L^2(\Omega_h)$. Then $\gamma$ is uniquely determined by the 
linearized 
incomplete Dirichlet-to-Neumann map  ${\Lambda_{1,\gamma}^{inc}}'$.
In fact, it suffices to have in \eqref{incdtn} 
$f(\phi) = \sin(n \phi)$ and $g(\phi) = \sin(k \phi)$,  $\phi \in [0,\pi]$ for $n,k = 1,\ldots$
\end{theorem}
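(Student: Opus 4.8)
The plan is to mimic the reconstruction strategy of the full-data case (Sections 3--4), but now using only the sine basis on the half circle, and to exploit the fact that on $\Omega_h$ the Dirichlet data on $\Gamma_0 = \{y=0\}$ is homogeneous. First I would note that if $f(\phi) = \sin(n\phi)$ on $\Gamma$ and we extend by zero to $\Gamma_0$, then $f^e(\phi) = \sin(n\phi)$ for $\phi \in [0,\pi]$ extends to the odd $2\pi$-periodic function $\sin(n\phi)$ on all of $\partial\Omega$. Hence the harmonic extension solving \eqref{incsolu} with $\gamma = 1$ is exactly $u_{f,1}(r,\phi) = r^n \sin(n\phi)$, the same solution \eqref{sols} as in the full-disk case, now restricted to $\Omega_h$. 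The same holds for $g(\phi) = \sin(k\phi)$. So the quantity \eqref{incdtn} is $\int_{\Omega_h} \gamma(x)\, \nabla(r^n\sin n\phi).\nabla(r^k \sin k\phi)\, dx$.

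Next I would carry out the same computation as in the proof of Proposition~1: using $\nabla u.\nabla v = nk\, r^{n+k-2}\cos((n-k)\phi)$ and integrating over $\Omega_h$, i.e. $\phi \in [0,\pi]$ and $r \in [0,1]$. Writing $\gamma$ in the angular Fourier series \eqref{par1xx}, the key difference from the full-disk case is that the $\phi$-integral now runs only over $[0,\pi]$, so BOTH the cosine coefficients $a_m(r)$ AND the sine coefficients $b_m(r)$ of $\gamma$ contribute (the orthogonality relations $\int_0^\pi \cos((n-k)\phi)\cos(m\phi)\,d\phi$ and $\int_0^\pi \cos((n-k)\phi)\sin(m\phi)\,d\phi$ are both generically nonzero). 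Collecting, the data $K^{inc}_{n,k}:= \langle {\Lambda_{1,\gamma}^{inc}}'\sin(n\phi),\sin(k\phi)\rangle$ equals $nk$ times a finite linear combination of the radial moments $\int_0^1 r^{n+k-1} a_m(r)\,dr$ and $\int_0^1 r^{n+k-1} b_m(r)\,dr$, where $m$ ranges over $|n-k|$ and $n+k$ (and, from $\cos((n-k)\phi)$ expanded on $[0,\pi]$, possibly a few neighboring indices of definite parity); the precise coefficients come from the elementary integrals $\int_0^\pi \cos(p\phi)\cos(q\phi)\,d\phi$ etc. The crucial structural observation is that $K^{inc}_{n,k}$ depends on $n,k$ only through $n+k$ and $|n-k|$ (after dividing by $nk$), exactly as in the full case.

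Given this, I would set $k$-analogues of the quantities $d_i$: fixing $m = n+k$ and $m = |n-k|$ and letting the diagonal offset vary, one recovers, from the data $K^{inc}_{n,k}$, ALL the moments $\int_0^1 r^{2i+m-1} a_m(r)\,dr$ and $\int_0^1 r^{2i+m-1} b_m(r)\,dr$ for $i = 1,2,\dots$ and every $m \geq 0$. This is where the half-disk actually helps: unlike the full disk, where $b_m$ enters only through the antisymmetric $K^{cs}$ block, here the sine-sine data alone already sees every $b_m$. Once all these moments are in hand, I apply the moment-problem inversion of Proposition~4 (the Müntz--Legendre inversion with $\lambda_i = 2(i-1)$) verbatim, to each $a_m$ and each $b_m$ separately, obtaining the full angular Fourier expansion of $\gamma$ on $\Omega_h$, hence $\gamma$ itself in $L^2(\Omega_h)$.

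The main obstacle I anticipate is disentangling the linear system at the level of the angular integrals: on $[0,\pi]$ the products of trigonometric functions do not satisfy as clean an orthogonality as on $[0,2\pi]$, so $K^{inc}_{n,k}$ is a superposition of moments with several different angular indices $m$ of the same parity as $n+k$, rather than just one. I would handle this by organizing the data according to the parity of $n+k$ and the value of $n+k$: for fixed $s = n+k$, the finitely many pairs $(n,k)$ with $n+k = s$ give a finite linear system (triangular in the index $m$, since higher $m$ requires larger $s$) whose coefficient matrix is explicit and invertible, letting me solve successively for $\int_0^1 r^{s-1} a_m(r)\,dr$ and $\int_0^1 r^{s-1} b_m(r)\,dr$ as $m$ decreases. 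Establishing the invertibility (nonvanishing determinant) of this small angular system is the one genuinely new computation; everything downstream reduces to results already proved above.
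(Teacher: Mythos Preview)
Your first paragraph is correct: for $f(\phi)=\sin(n\phi)$ on $\Gamma$ extended by zero on $\Gamma_0$, the harmonic extension on $\Omega_h$ is indeed $r^n\sin(n\phi)$, and $\nabla u.\nabla v = nk\,r^{n+k-2}\cos((n-k)\phi)$.

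The gap is in your treatment of the angular expansion of $\gamma$. Two points. First, the expansion \eqref{par1xx} is a Fourier series on $[0,2\pi]$; on the half-interval $[0,\pi]$ the family $\{\cos(m\phi),\sin(m\phi):m\ge 0\}$ is over-complete, so ``the coefficients $a_m(r),b_m(r)$ of $\gamma$'' are not well-defined for a function $\gamma\in L^2(\Omega_h)$, and there is nothing canonical to recover. Second, and more importantly, your claim that $\int_0^\pi \cos((n-k)\phi)\cos(m\phi)\,d\phi$ is ``generically nonzero'' is false: since $\int_0^\pi\cos(l\phi)\,d\phi=0$ for every nonzero integer $l$, the cosines $\{\cos(m\phi):m\ge 0\}$ \emph{are} orthogonal on $[0,\pi]$. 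Consequently the ``disentangling'' linear system you anticipate in the last paragraph never arises.

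The fix, which is what the paper does, is to expand $\gamma$ on $[0,\pi]$ in its half-range \emph{cosine} series alone, $\gamma(r,\phi)=\sum_{m\ge 0}a_m(r)\cos(m\phi)$; this is always possible and unique for $\gamma\in L^2(\Omega_h)$. With that expansion the angular integral collapses exactly as on the full disk and one obtains directly
\[
\langle{\Lambda_{1,\gamma}^{inc}}'\sin(n\cdot),\sin(k\cdot)\rangle
= nk\,\tfrac{\pi}{2}\,\zeta_{n,k}\int_0^1 r^{n+k-1}a_{|n-k|}(r)\,dr,
\]
after which M\"untz' theorem (or Proposition~4) finishes the uniqueness. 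The paper phrases this via the equivalent device of extending $\gamma$ symmetrically to the full disk, $\tilde\gamma(x,-y)=\gamma(x,y)$, and the solutions antisymmetrically by Schwarz reflection; the symmetric extension has the \emph{same} pure-cosine Fourier series on $[0,2\pi]$, so one can simply quote Proposition~1 for $K^{ss}$ rather than redo the computation.
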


\begin{proof}
Setting $f_n =  \sin(n \phi)$  and, by using polar coordinates, we obtain 
the associated solution $u_{f_n,1}(r,\phi) = r^n \sin(n \phi),$ $r \in (0,1)$, 
$\phi \in [0, \pi]$, and similar for $g_k(\phi) = \sin(k \phi)$ with 
 $u_{g_k,1}(r,\phi) = r^k \sin(k \phi),$ $n,k \geq 1$. 
By the Schwarz' reflection principle we may extend $u$ (antisymmetrically with respect 
to $y = 0$) to a harmonic
map on the unit disk and obtain the functions 
\[ u_{\tilde{f}_n,1}(r,\phi) = r^n \sin(n \phi), \quad  
u_{\tilde{g}_k,1}(r,\phi) = r^k \sin(k \phi) , \quad r \in (0,1), \phi \in [0,2 \pi]. \]
Extending $\gamma$ symmetrically to the lower half disk by 
\[ \tilde{\gamma}(x,-y) = \gamma(x,y), \quad y <0, \]
we observe that  the integral in \eqref{incdtn} over the lower half disk 
equals that of the upper half disk. Thus, we can express  the linearized 
incomplete  Dirichlet-to-Neumann map via the linearization of 
the full  Dirichlet-to-Neumann map on the unit disk with $\tilde{\gamma}$. 
That is 
\[  \langle{\Lambda_{1,\gamma}^{inc}}'f_n,g_k\rangle  = 
\frac{1}{2} \langle \Lambda_{1,\tilde{\gamma}}' \tilde{f}_n,  \tilde{g}_k \rangle, \]
where $\tilde{f}_n$ and $\tilde{g}_k$ are the antisymmetric extensions 
to the lower half, i.e., $\tilde{f}_n(\phi) = \sin(n \phi)$, 
$\tilde{g}_k(\phi) = \sin(k \phi)$, for $\phi \in [0, 2\pi]$. 
Finally, we may expand $\gamma$ on the upper half disk into a 
pure cosine series, i.e., using polar coordinates
\[ \gamma(r,\phi) = \sum_{n=0}^\infty a_n(r) \cos(n \phi), \quad \phi \in [0,\pi],\]
which, by our symmetric extension immediately gives the expansion of 
$\tilde{\gamma}$ 
\[ \tilde{\gamma}(r,\phi) = \sum_{n=0}^\infty a_n(r) \cos(n \phi), \quad \phi \in [0,2 \pi].\]
Now we may use the formula for $\Lambda_{1,\tilde{\gamma}}'$ to conclude 
\[  2({\Lambda_{1,\gamma}^{inc}}'f_n,g_k)  = 
n k \pi \myg_{n,k} \int_0^1 r^{n+k-1} a_{|n-k|}(r) dr.  \]
Thus, by considering 
$({\Lambda_{1,\gamma}^{inc}}'f_n,g_{n+l})$, for $n = 1,\ldots$, $l = 0,\ldots$,
and the completeness of the function families  $(r^{2n+l-1})_{n=1}^\infty$ for 
$l = 0,\ldots$ that follows from M\"untz' theorem, we have shown that 
the coefficients $a_n$ are uniquely determined, which completes the proof. 
\end{proof}

An inversion formula is provided by \eqref{gammsol} using only a cosine expansion, 
\[ \gamma(r,\phi) = \frac{1}{2} \LM{0}{n}(r)  \cff_{n,0}  
+ \sum_{k=1}^\infty  
\sum_{n=0}^\infty  \LM{k}{n}(r) \cos(k \phi)  \cff_{n,k},  \]
and noting that with our sine boundary functions 
\[ f_n(\phi) = \sin(n \phi) \quad   g_k(\phi) = \sin(k \phi),\] 
we have 
$2({\Lambda_{1,\gamma}^{inc}}'f_{i},g_{j}) = K_{i,j}^{ss}$ 
leading to the coefficient formula 
\begin{align} \label{formulahalf}
\cff_{n,k} &= \sum_{l=0}^n 
2 (-1)^{n-l} (2 n + k + 1)  \frac{\Pi_{j=0}^{n-1} (1 + k +  (l + j) )}{
 l ! (n - l)!} 
\frac{2\langle{\Lambda_{1,\gamma}^{inc}}'f_{l+1},g_{l+k+1}\rangle}{(l+1) (l+1+k) \pi}. 
\end{align}

\subsection{Incomplete measurements on the disk}
We now study a similar problem as before but on the full disk and 
where the measurements are available only on a interval on the boundary. 
Specifically, we assume access only to a interval on the unit circle on the upper half of the form 
\begin{equation}\label{yyy}
\phi \in I: =  [\frac{\pi}{2}-\alpha,  \frac{\pi}{2}+ \alpha] ,
\end{equation} where, $0 < \alpha < \frac{\pi}{2}$ and 
$\phi$ is the angular coordinate. The restriction 
to  $\alpha < \frac{\pi}{2}$ is only for convenience and could be dropped. 
Again considering the linearized case, we study boundary value problems 
with given $f$ defined on $I$. 
\begin{equation}\label{xx} 
\begin{split} 
\Delta u_{f,1}  &= 0 \qquad \mbox{in } \Omega = \{(x,y) \in \R^2\,|\, x^2 +y<^2 <1 \}, \\
         u_{f,1} & = \begin{cases} f & \mbox{ on } I \cap \partial \Omega,  \\ 
            0 & \mbox{ on }   \partial \Omega \setminus I.   
                \end{cases} 
\end{split} 
\end{equation}
As above, we have to restrict ourselves to functions $f$ for which the zero extension to 
$\partial \Omega$ is in $H^\frac{1}{2}$, or, equivalently, to Dirichlet values where 
the problem \eqref{xx}
has a solution in $H^1(\Omega)$. 

We study the analogous linearized problem  with data available only on $I$, 
that is, we define the linearized incomplete Dirichlet-to-Neumann operator 
\begin{equation}\label{zzz}   \langle{\Lambda_{1,\gamma}^{inc}}'f_n,g_k\rangle := 
\int_{\Omega} \gamma \nabla  u_{f,1}. \nabla  u_{g,1} dx \qquad f^e,g^e \in H^{\frac{1}{2}}. 
\end{equation} 
Here $f^e,g^e$ denotes the zero extension to the whole boundary as before. 

We proof the following theorem:
\begin{theorem}\label{myth}
Let $\gamma \in L^2(\Omega)$ be such that $\gamma = 0$ 
in a neighborhood of the endpoints of $I$. 
 Let the linearized 
incomplete Dirichlet-to-Neumann operator ${\Lambda_{1,\gamma}^{inc}}'$ be defined 
in \eqref{zzz} with data on the interval $I$. 
Then $\gamma$ is uniquely determined by the values 
\[ \left\{ \langle{\Lambda_{1,\gamma}^{inc}}'f ,g \rangle , \, |\, f^e ,g^e \in  H^{\frac{1}{2}} \right\} . \] 
\end{theorem}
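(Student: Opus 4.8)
The plan is to reduce the assertion to the half--disk case, Theorem~\ref{onemain}, by a conformal change of variables that straightens the insulated arc $A:=\partial\Omega\setminus I$ into a diameter. Write $p,q$ for the two endpoints of $I$ (which are also the endpoints of $A$). First I would use the Riemann mapping theorem and Carath\'eodory's extension theorem to produce a conformal map $\Phi$ from $\Omega$ onto the upper half disk $\Omega_h=\{(x,y)\,|\,x^2+y^2<1,\ y>0\}$ that extends to a homeomorphism of the closures and sends $\{p,q\}$ to $\{-1,1\}$, the arc $I$ onto the open semicircle $\Gamma$, and $A$ onto the diameter $\Gamma_0$; since the conformal automorphisms act simply transitively on ordered boundary triples, fixing the images of $p$, $q$ and one interior point of $I$ determines $\Phi$, and the claimed arc--to--arc behaviour follows because conformal maps preserve the cyclic order of boundary points. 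Because $A,I$ are circular arcs and $\Gamma_0,\Gamma$ are analytic, the Kellogg--Warschawski boundary regularity theory gives that $\Phi$ is analytic with non-vanishing derivative on $\overline\Omega\setminus\{p,q\}$, while at $p$ and $q$ the interior opening angle drops from $\pi$ to $\pi/2$, so there $\Phi$ behaves like a square root, $\Phi^{-1}$ like its inverse (smooth, vanishing to second order), and $\Phi'$ may blow up like $\operatorname{dist}(\cdot,\{p,q\})^{-1/2}$.

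Next I would invoke the conformal invariance of the bilinear form in \eqref{zzz}. For $u,v$ harmonic in $\Omega$ with $u|_A=v|_A=0$, set $\widetilde u:=u\circ\Phi^{-1}$ and $\widetilde v:=v\circ\Phi^{-1}$; these are harmonic in $\Omega_h$, vanish on $\Gamma_0$, and lie in $H^1(\Omega_h)$ since the Dirichlet integral is conformally invariant in two dimensions. Combining the chain rule, $(\nabla u\cdot\nabla v)(z)=|\Phi'(z)|^{2}(\nabla\widetilde u\cdot\nabla\widetilde v)(\Phi(z))$, with the area change $dA(\Phi(z))=|\Phi'(z)|^{2}\,dA(z)$, the conformal factor cancels and one obtains
\[
\int_{\Omega}\gamma\,\nabla u\cdot\nabla v\,dA=\int_{\Omega_h}\widetilde\gamma\,\nabla\widetilde u\cdot\nabla\widetilde v\,dA,\qquad \widetilde\gamma:=\gamma\circ\Phi^{-1},
\]
so that the left-hand side equals the half--disk incomplete Dirichlet--to--Neumann pairing \eqref{incdtn} for the coefficient $\widetilde\gamma$ and data $\widetilde u|_\Gamma,\widetilde v|_\Gamma$. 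The hypothesis that $\gamma$ vanish near $p$ and $q$ enters exactly here: it forces $\widetilde\gamma\equiv0$ near $\pm1$, and away from $\pm1$ the derivative of $\Phi^{-1}$ is bounded, so $\int_{\Omega_h}|\widetilde\gamma|^2\,dA\le C\int_\Omega|\gamma|^2\,dA<\infty$, i.e.\ $\widetilde\gamma\in L^2(\Omega_h)$; without such a localization $\widetilde\gamma$ need not be square integrable because of the singularity of $\Phi'$.

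Finally I would transport the data and quote Theorem~\ref{onemain}. That theorem produces $\widetilde\gamma$ from the numbers $\langle{\Lambda_{1,\widetilde\gamma}^{inc}}'\sin(n\phi),\sin(k\phi)\rangle$, $n,k\ge1$; by the identity just displayed these equal $\langle{\Lambda_{1,\gamma}^{inc}}'f_n,g_k\rangle$, where $f_n:=\operatorname{Im}(\Phi^n)|_I$ and $g_k:=\operatorname{Im}(\Phi^k)|_I$ (note that $\operatorname{Im}(\Phi^n)$ is harmonic in $\Omega$, vanishes on $A$, and reduces to $\sin(n\phi)$ after transport, so it is the solution of \eqref{xx} with datum $f_n$). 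Using the first-order vanishing of $\sin(n\phi)$ at the corners $\pm1$ together with the square-root behaviour of $\Phi$ at $p,q$, the zero extensions $f_n^e,g_k^e$ vanish like $\operatorname{dist}(\cdot,\{p,q\})^{1/2}$ on $\partial\Omega$ and are analytic elsewhere, hence belong to $H^{1/2}(\partial\Omega)$, so these pairings are among the prescribed data. Therefore $\widetilde\gamma$---and with it $\gamma=\widetilde\gamma\circ\Phi$---is uniquely determined, and pulling back the inversion formula \eqref{formulahalf} through $\Phi$ even yields an explicit reconstruction of $\gamma$.

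I expect the one genuinely delicate step---and the reason for the standing assumption on $\gamma$---to be the analysis at the corner points $p,q$: one must control the asymptotics of $\Phi$, $\Phi^{-1}$ and $\Phi'$ there precisely enough to secure both that $\widetilde\gamma\in L^2(\Omega_h)$ and that the transported test functions still have zero extensions in $H^{1/2}(\partial\Omega)$. Everything else is the conformal invariance of the Dirichlet integral followed by a direct appeal to Theorem~\ref{onemain}.
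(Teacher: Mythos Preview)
Your proposal is correct and follows essentially the same route as the paper: reduce to the half--disk result (Theorem~\ref{onemain}) via a conformal map between the disk and the half disk, transport the sine test data through that map, and use conformal invariance of the Dirichlet form together with the support hypothesis on $\gamma$ to keep the transported coefficient in $L^2$. The only differences are cosmetic---the paper builds the conformal map explicitly (Lemma~\ref{lembef}) instead of invoking Riemann/Carath\'eodory---and one small slip in your $L^2$ estimate for $\widetilde\gamma$: the change of variables produces the factor $|\Phi'(z)|^2$ on $\operatorname{supp}\gamma\subset\Omega$ (bounded because $\gamma$ vanishes near $p,q$), not $|(\Phi^{-1})'|$ away from $\pm1$.
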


The proof is based on conformal mappings; more specifically on the following lemma:
\begin{lemma}\label{lembef}
There exists a conformal map $\con$ from the  the upper half  of the unit disk to the full 
unit disk 
which can be extended to a homeomorphism from and to  the closures of the respective sets 
such that the left and right endpoints of the half disk, $(-1,0)$ and $(1,0)$ are mapped 
to the left and right endpoints of the interval in \eqref{yyy}.
\end{lemma}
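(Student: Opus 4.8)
The plan is to construct the conformal map $\con$ explicitly as a composition of standard elementary maps, each of which is a conformal homeomorphism of the relevant closed regions, and then compose with a disk automorphism to fix the images of the two endpoints. First I would straighten the half-disk: the Joukowski-type map $z \mapsto -\frac{1}{2}(z + z^{-1})$ (or a suitable Möbius-Joukowski combination) sends the upper half-disk $\Omega_h = \{|z|<1, \operatorname{Im} z > 0\}$ conformally onto the upper half-plane $\mathbb{H}$, mapping the diameter $[-1,1]$ and the upper semicircle onto the two complementary rays of $\R \cup \{\infty\}$; in fact the more classical choice is to first apply a Möbius map sending the half-disk to a quarter-plane and then square, but either route ends at $\mathbb{H}$. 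I would track carefully where the two corner points $(-1,0)$ and $(1,0)$ go under this map — they land at two specific finite real points (after the Joukowski map, at $\pm 1$). Next, the Cayley transform $w \mapsto \frac{w-i}{w+i}$ carries $\mathbb{H}$ onto the unit disk $\Omega$, extending to a homeomorphism of closures (with $\partial\mathbb{H} \cup \{\infty\}$ going to $\partial\Omega$), and it sends the two tracked boundary points to two concrete points on the unit circle.

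The composition so far is already a conformal homeomorphism $\Omega_h \to \Omega$ extending to the closures; the only thing left is to arrange that the images of $(-1,0)$ and $(1,0)$ are precisely the endpoints $e^{i(\pi/2 - \alpha)}$ and $e^{i(\pi/2 + \alpha)}$ of the interval $I$ from \eqref{yyy}. Since the current map sends those two corner points to \emph{some} pair of distinct points on $\partial\Omega$, I would post-compose with an automorphism of the disk. The automorphism group of $\Omega$ acts transitively on the circle and in fact $2$-transitively up to the natural constraint, so I can find a disk automorphism (a Blaschke factor $\frac{z-a}{1-\bar a z}$ composed with a rotation $e^{i\theta}$) mapping the given pair of boundary points to the desired pair $\{e^{i(\pi/2\pm\alpha)}\}$; one may even insist that the left endpoint of the half-disk goes to the left endpoint $e^{i(\pi/2+\alpha)}$ of $I$ (i.e., respect orientation/labelling) by choosing the rotation appropriately. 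Composing everything gives the required $\con$.

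The main obstacle — really the only nontrivial bookkeeping — is verifying that each map in the chain genuinely extends to a \emph{homeomorphism of the closed} regions, including behavior at the two corner points of $\Omega_h$ where the boundary has interior angle $\pi/2$: one must check the Joukowski/squaring steps open up these corners to angle $\pi$ so that the composite boundary correspondence is a homeomorphism and not merely a continuous surjection. This is standard (corners of angle $\pi/k$ are unfolded by the appropriate power map, and Möbius maps are homeomorphisms of the Riemann sphere), but it is the point where care is needed. A clean way to present it is to exhibit the explicit formula for the composite $\con$ and simply observe that it is a Möbius transformation of a square root of a Möbius transformation, hence injective and continuous with continuous inverse on the closed half-disk, with the endpoint images computed directly from the formula; then adjust by the automorphism. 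I would also note the mild point that the hypothesis $0 < \alpha < \pi/2$ guarantees the target interval $I$ is a proper sub-arc, so the endpoint-matching automorphism exists and is unique once orientation is fixed, and remark (as the paper does) that $\alpha \ge \pi/2$ could be accommodated with the same construction.
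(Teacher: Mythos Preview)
Your proposal is correct and follows essentially the same route as the paper: map the upper half-disk conformally to the upper half-plane, then Cayley-transform to the disk, then post-compose with a disk automorphism to place the two corner images at the endpoints of $I$. The only cosmetic differences are that the paper uses the M\"obius-plus-squaring chain $\theta_1(z)=(1+z)^2/(1-z)^2$ (your ``more classical choice'') rather than the Joukowski variant you lead with, and it writes down the automorphism parameters explicitly ($\mu=\pi$, $w=-i\cos\alpha/(1+\sin\alpha)$) instead of invoking transitivity abstractly; it also cites Carath\'eodory's theorem for the boundary extension where you argue the corner-unfolding directly.
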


\begin{proof}
A conformal map that takes the upper half disk to the unit disk is well known:
The combination of the mappings $\f_0 = \frac{1+z}{1-z}$ and $z \to z^2$ 
gives the map 
$\f_1(z) = (1 + z)^2/(1 -z)^2$ \cite[Ex 2, 3, pp.~210] {Stein} which 
takes the upper half disk to the upper half plane, while the fractional 
transform $\f_2(z):= \frac{z-i}{z+i}$ maps
the upper half plane to the unit disk. Combining these two maps gives 
\[ \f(z) = \f_2 \circ \f_1 
= \frac{{(1+z)^2} - i (1-z)^2}{{(1+z)^2} +i(1-z)^2},
\]
which maps the upper half disk to the unit disk and leaves the half-disk's endpoints
at $z = \pm 1 + 0 i$ invariant. Moreover,  the boundary $\{ x^2 +y^2 = 1,y \geq 0\}$ is mapped 
to itself, while the lower part of the boundary, $\Omega_h\cap \{y = 0\}$, is mapped to the lower half of the
unit circle.  
By Caratheodory's theorem the mappings extends to homeomorphisms, but 
for this example, we can calculate explicitly that the mappings on the boundary 
are invertible. Finally, we observe that the mapping is smooth on the closure of the upper half disk.  
To achieve a mapping with the specifications about the endpoints, we consider the 
conformal automorphisms of the unit disc, 
\[ \h(z) = e^{i \mu} \frac{z - w}{\overline{w} z -1} \quad \mu \in [0,2 \pi], |w| < 1. \] 
The conditions $\f(1) = e^{i (\frac{\pi}{2} -\alpha)}$ and 
$\f(-1) = e^{i (\frac{\pi}{2} +\alpha)}$ can be satisfied by  $\mu = \pi$
and $w = -i \frac{\cos(\alpha)}{1 + \sin(\alpha)}$. It is easily verified that 
$|w| < 1$ holds such that $\h$ is indeed an automorphism of the unit circle. 
Setting $\con = \h \circ \f$ provides the desired map. 

We note that the inverse of the conformal map is given by 
\[   \con^{-1} =  \f_0^{-1} \circ  \sqrt{} \circ \f_2^{-1} \circ \h^{-1}, \]
where $\f_0^{-1} = \frac{z-1}{z+1}$, $f_2^{-1} =  i \frac{z+1}{1-z}$ and 
$\sqrt{}$ is the complex square root with branch cut at the negative imaginary axis. 
By calculating the derivative of $\con$, we observe that $|D \con|$ vanishes 
only at the end points of the circle $(1,0)$ and $(-1,0)$.  Since they are 
mapped to the endpoints of $I$,  the inverse 
is smooth on $\overline{\Omega}$ away from these points. 
\end{proof}

\begin{proof}[Proof of Theorem~\ref{myth}]
Start with the functions $\tilde{f}^e$ and $\tilde{g}^e$ that are defined 
on the boundary of the upper half circle by \eqref{def} with $f = \sin(n \phi)$ and 
$g = \sin(k \phi)$ on the upper boundary $\Gamma$ (and extended by zero to $\Gamma_0$).
Consider their images by the conformal map of Lemma~\ref{lembef}:
\[ f^e = \tilde{f}\circ \con \quad g^e = \tilde{g}\circ \con.  \]
These are continuous functions on the unit circle and they are supported in 
the interval $I$ by construction. Moreover, since 
 $\tilde{f}^e$, $\tilde{g}^e$ are the boundary values of the 
 harmonic functions $\tilde{u}_{\tilde{f}} (r,\phi) = r^n \sin(n \phi) = \mbox{Im}(z^n) $ and 
 $r^k \sin(k \phi) = \mbox{Im}(z^k)$, the so defined functions $f^e,g^e$ correspond 
 (by conformality) to harmonic functions 
 \[ u_{f,1} = \mbox{Im}(({\phi^{-1}})^n) \quad u_{g,1} = \mbox{Im}(({\phi^{-1})}^k). \] 
We have by conformal invariance of the Dirichlet integral that 
\[ \int_{\Omega} |\nabla u_{f,1}|^2 dx =  \int_{\Omega_h} |\nabla \tilde{u}_{0,\tilde{f}}|^2 dx \]
and 
\[ \int_{\Omega} |u_{f,1}|^2 dx =  \int_{\Omega_h} |\tilde{u}_{0,\tilde{f}}|^2 |D \con|(x) dx 
\leq   \sup_{x \in \overline{\Omega_h}} |D \con(x)| \int_{\Omega_h} |\tilde{u}_{0,\tilde{f}}|^2 
< \infty \]
because  $|D \con(x)|$ is smooth.  Thus, $u_{f,1}$ (and clearly similarly  $u_{g,1}$)  
are in $H^1$, thus, 
$f^e$, and $g^e$ are in $H^{1/2}$ and the restriction to $I$ can 
be used as data for $({\Lambda_{1,\gamma}^{inc}}'f ,g )$.
We then have 
\begin{align*}  \langle{\Lambda_{1,\gamma}^{inc}}'f ,g \rangle = 
\int_{\Omega} \gamma \nabla u_{f,1}. \nabla u_{g,1}dx = 
\int_{\Omega^h} \gamma(\con(x))   \nabla \tilde{u}_{f,1}. \nabla \tilde{u}_{g,1}dx, 
\end{align*}
where $\tilde{u}_{f,1}(r,\phi) = r^n \sin(n\phi)$ and  $\tilde{u}_{g,1}=  r^k \sin(k\phi)$. 
We verify that $\gamma(\con^{-1})$ is in $L^2(\Omega_h)$:
\[ \int_{\Omega^h}  \gamma(\con(x))^2 dx = 
\int_{\Omega}  \gamma(x)^2 dx |D \con^{-1}| dx \leq 
\sup_{x \in {\rm supp}(\gamma)} |D \con^{-1}|(x)|  \int_{\Omega}  \gamma(x)^2 dx  < \infty. \] 
The later inequality holds because $\con^{-1}$ is  smooth away from the interval endpoints. 
Thus $ \gamma\circ \con $ is uniquely determined by Theorem~\ref{onemain} and, 
hence, so is $\gamma$. 
\end{proof}

The inversion formula in this case is 
\[ \gamma(r,\phi) =\left[ \frac{1}{2} \LM{0}{n}(r) \cff_{n,0}  
+ \sum_{k=1}^\infty  
\sum_{n=0}^\infty  \LM{k}{n}(r) \cos(k \phi) \cff_{n,k}  \right] \circ \con^{-1} \]
with coefficients given by \eqref{formulahalf}, where $f_n,g_n$ are the 
transformed sine functions as in the proof. 

Also in this situation it is possible to study the finite-data case and again derive a formula as 
in Corollary~\ref{cor2}. The ``flatness'' of the Müntz-Legendre polynomials is transformed 
by the conformal map to the region opposite of the data interval $I$, which shows the 
expectable fact that  variations in the parameter $\gamma$ that are located 
opposite to the data site are hardest to reconstruct. 

\section{Final comments}
We have shown that the expansion of the parameters 
into  trigonometric/Müntz-Legendre polynomials is an interesting tool for the
linearized impedance 
tomography problem as it allows for an explicit inversion formula and a transparent
characterizations of what can be identified in the finite data case. 

When it comes to numerical calculations, even with an explicit 
inversion formula, problems may occur because of, e.g., rounding errors. Note that the formula
shows the typical features of inversion in the case of ill-posed problems, namely values 
of opposite signs have to be added, which may leads to cancellation. Thus, it is 
a good idea to include a regularization also here.  
On the other hand, the inversion formula certainly provides a fast solution method 
compared to a PDE-based approach 
as it operates directly on
the data space and no  interior grid has to be used. 

It would be interesting to analyze the corresponding three-dimensional case. There, the 
trigonometric expansion in the angular coordinate is naturally replaced by spherical 
harmonics. When also expanding $\gamma$ in this basis, we similarly come to integrals 
that involve triple combinations of spherical harmonics, which leads to quite complicated 
combinatorial coefficients. We do not know  whether an approach for an inversion 
formula succeeds in this case. 

\bibliographystyle{siam}
\bibliography{LEIT} 
\end{document}